\documentclass[12pt]{amsart}
\usepackage[T1]{fontenc}
\usepackage{lmodern}
\usepackage[a4paper]{geometry}
\usepackage{amsmath,amssymb,amsthm,mathtools}
\usepackage{booktabs,tabularx,array}
\usepackage[backref,colorlinks=true,linkcolor=blue,citecolor=blue,urlcolor=blue]{hyperref}
\usepackage{bm}

\theoremstyle{plain}
\newtheorem{thm}{Theorem}[section]
\newtheorem{coro}[thm]{Corollary}
\newtheorem{lem}[thm]{Lemma}
\newtheorem{prop}[thm]{Proposition}

\theoremstyle{definition}
\newtheorem{conj}[thm]{Conjecture}
\newtheorem{rmk}[thm]{Remark}

\numberwithin{equation}{section}
\numberwithin{figure}{section}

\newcommand{\bflam}{\bm{\lambda}}
\newcommand{\dd}{\mathbf d}
\newcommand{\im}{\operatorname{im}}
\newcommand{\Ldown}{L^{\operatorname{down}}}
\newcommand{\Lup}{L^{\operatorname{up}}}
\newcommand{\lk}{\operatorname{lk}}
\newcommand{\p}{\partial}
\newcommand{\pre}{\preccurlyeq}
\newcommand{\R}{\mathbb R}
\newcommand{\rank}{\operatorname{rank}}
\newcommand{\tr}{\operatorname{tr}}

\title[Laplacian Eigenvalue Sums of Complexes]{Degree Majorization and Laplacian Eigenvalue Sums for Simplicial Complexes}

\author [H.-Z. Zhang]{Huan-Zhi Zhang}
\address{\small School of Mathematical Sciences, Anhui University, Hefei 230601, P. R. China}
\email{zhanghz@stu.ahu.edu.cn}

\author[Y.-M. Song]{Yi-Min Song$^\dag$}
\address{School of Mathematics and Physics, Anhui Jianzhu University, Hefei 230601, P. R. China}
\email{songym@ahjzu.edu.cn}
\thanks{$^\dag$Supported by National Natural Science Foundation of China (Grant No. 12501468).}

\author [Y.-Z. Fan]{Yi-Zheng Fan*}
\address{\small Center for Pure Mathematics, School of Mathematical Sciences, Anhui University, Hefei 230601, P. R. China}
\email{fanyz@ahu.edu.cn}
\thanks{*Corresponding author. Supported by National Natural Science Foundation of China (No. 12331012).}
	
\subjclass[2020]{05C50, 05E45, 15A18, 55U10}
	
\keywords{Simplicial complex; Laplacian eigenvalue; majorization; degree}

\begin{document}

\begin{abstract}
Let $K$ be an $r$-dimensional simplicial complex.
We prove that the spectrum of its $(r - 1)$-dimensional up-Laplacian is majorized by the conjugate degree sequence of its $(r - 1)$-dimensional faces:
\[
\bflam_{r-1}(K) \pre \dd_{r-1}^\top(K).
\]
We also establish a Brouwer-type inequality: for every integer $\ell \geq 1$,
\[
\sum_{i = 1}^{\ell}\lambda_{r-1,i}(K)
\leq
\frac{r + 1}{2}f_r(K)
+
\frac{f_{r - 2}(K)}{r}
\binom{\ell + 1}{2},
\]
where $\lambda_{r-1,i}(K)$ denotes the $i$-th largest eigenvalue in the spectrum $\bflam_{r-1}(K)$, and $f_t(K)$ denotes the number of $t$-dimensional faces of $K$.
These results provide higher-dimensional analogs of the
Grone-Merris-Bai theorem and the Brouwer-Kothari-Tudose theorem 
and recover the corresponding graph results when $r=1$.

We show that the Duval-Reiner conjecture on the majorization by the conjugate degree sequence of vertices fails in every dimension $r \geq 2$.
More precisely, for every $n \geq r + 5$, we construct a pure $r$-dimensional complex on $n$ vertices that violates the conjectured inequality at the fifth partial sum.
\end{abstract}

\maketitle

\section{Introduction}
An \emph{abstract simplicial complex}, or simply a \emph{complex} $K$ on a finite set $V$ is a collection of subsets of $V=V(K)$ closed under inclusion. 
Throughout the paper, all complexes or graphs are finite.
An element of $K$ of cardinality $i+1$ is called an \emph{$i$-face} or an \emph{$i$-simplex} of $K$, and its dimension is $i$.
Usually a $0$-face is called a \emph{vertex}, and a $1$-face is called an \emph{edge}.
The \emph{dimension} of $K$ is the maximum dimension of all faces.
Let $C_i(K;\R)$ be the real \emph{$i$-chain space} of $K$ spanned by oriented $i$-faces of $K$, and let $\p_i: C_i(K;\R)\to C_{i-1}(K;\R)$ be the \emph{$i$-th boundary map} of $K$.
It is known that $\p_{i+1}\p_i =0$, and the \emph{$i$-th homology group} of $K$ is defined by $H_i(K;\R):=\ker \p_i / \im \p_{i+1}$.
With respect to the standard inner products on the chain spaces, let $\p_i^*$ be the adjoint operator of $\p_i$.
The \emph{$i$-dimensional Laplacian} of $K$ is defined by
\[
L_i(K) = \p_{i + 1}\p_{i + 1}^{*} + \p_i^{*}\p_i,
\]
where $\Lup_i(K):=\p_{i + 1}\p_{i + 1}^{*}$ is the \emph{$i$-dimensional up Laplacian} of $K$, and $\Ldown_i(K):=\p_i^{*}\p_i$ is the \emph{$i$-dimensional down Laplacian} of $K$.
In 1944, Eckmann \cite{E1944} formulated and proved the discrete version of the Hodge theorem:
\[
\widetilde H_i(K;\R) \cong \ker(L_i(K)).
\]

The Laplacian spectra of simplicial complexes have received a lot of attention.
Horak and Jost \cite{HJ2013a, HJ2013b} developed a general framework for weighted Laplace operators on simplicial complexes and established general eigenvalue bounds and interlacing inequalities.
Duval and Reiner \cite{DR2002} studied Laplacian spectra of shifted simplicial complexes, while Duval, Klivans, and Martin \cite{DKM2009} proved the simplicial matrix-tree theorems.
Aharoni, Berger, and Meshulam \cite{ABM2005} established inequalities relating the smallest eigenvalues of successive higher Laplacians of flag complexes and derived a spectral criterion for the vanishing of their reduced homology, and Lew \cite{L2020b} extended these inequalities to general complexes with bounded missing face dimension.
Lew \cite{L2020a} also obtained the lower bound for the smallest eigenvalue of the Laplacian for complexes with bounded missing face dimension and further conjectured the corresponding extremal construction.
Zhan, Huang, and Lin \cite{ZHL2026} subsequently proved this conjecture.

Recent work has also studied the bound for the largest up-Laplacian eigenvalue of complexes.
Fan, Wu, and Wang \cite{FWW2025} proved that the largest up-Laplacian eigenvalue is at most the spectral radius of the corresponding signless up-Laplacian, and characterized equality under $r$-path connectedness.
Recently, Zhang and Fan \cite{ZF2026b} established a sharper upper bound for the largest up-Laplacian eigenvalue in terms of unions of neighborhoods of the $(r - 1)$-faces contained in an $r$-face.
Signless Laplacians on simplicial complexes were introduced in \cite{KO2020} and have since been studied in several directions; see, for example, \cite{FSZ2025,FZ2026,L2014,SFS2026,ZF2026a}.
Normalized Laplacians and incidence balancedness were studied in \cite{SWF2025}.

In this paper, we are focusing on the bound of the up-Laplacian eigenvalue sum in terms of the degrees of faces.
Let $K$ be an $r$-dimensional complex on vertex set $V$ with $r \ge 1$.
For $-1 \leq i \leq r$, let $S_i(K)$ be the set of $i$-faces and write $f_i(K) = |S_i(K)|$, with $S_{-1}(K) = \{\emptyset\}$ and $f_{-1}(K) = 1$.
Let $\bflam_i(K):= \bigl(\lambda_{i,1} \ge \lambda_{i,2} \ge \cdots \ge \lambda_{i,f_i(K)}\bigr)$ denote the spectrum of $\Lup_{i}(K)$ arranged in nonincreasing order.
For every $i$-face $\sigma$ of $K$, the \emph{degree} of $\sigma$ is defined by
$$ \deg_i(K; \sigma)=|\{F \in S_r(K): \sigma \subseteq F\}|.$$
The \emph{$i$-degree sequence} of $K$, denoted by $\dd_i(K)=(\deg_{i,1}(K) \ge \cdots \ge \deg_{i,f_i(K)}(K))$, is the sequence of degrees of all $i$-faces of $K$ arranged in nonincreasing order.
The	\emph{conjugate $i$-degree sequence} of $K$, denoted by $$\dd_i^\top(K)=(\deg^\top_{i,1}(K) \ge \deg^\top_{i,2}(K) \ge \cdots),$$ is the conjugate of $\dd_i(K)$, defined by
\[
\deg^\top_{i,j}(K):=\bigl|\{t:\deg_{i,t}(K) \geq j\}\bigr|, ~ j \ge 1.
\]

All sequences are padded with zeros if we consider the majorization between two sequences below.
For nonincreasing finitely supported sequences $\mathbf{x}$ and $\mathbf{y}$, we say that \emph{$\mathbf{x}$ is majorized by $\mathbf{y}$}, denoted by $\mathbf{x} \pre \mathbf{y}$, if
\[
\sum_{i = 1}^{\ell} x_i \leq \sum_{i = 1}^{\ell} y_i
~ \text{for every $\ell \geq 1$},
 \text{and~}
\sum_{i \geq 1} x_i = \sum_{i \geq 1} y_i.
\]

Viewing a graph as a one-dimensional simplicial complex gives the following theorem.

\begin{thm}[Grone-Merris-Bai \cite{B2011, GM1994}]\label{gmb}
For every simple graph $G$,
\[
\bflam_{0}(G) \pre \dd_0^\top(G).
\]
\end{thm}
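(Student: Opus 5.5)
Our plan is to prove the Grone--Merris majorization by Bai's route: induction on the number of edges, combined with a Ky Fan type interlacing argument. The total-sum condition is immediate. We have $\sum_i \lambda_{0,i}(G) = \tr \Lup_0(G) = \sum_v \deg(v) = 2|E(G)|$. On the other side, $\sum_j \deg^\top_{0,j}(G) = \sum_v \deg(v)$, since conjugation preserves the total. So the whole content lies in the partial-sum inequalities
\[
\sum_{i=1}^{\ell}\lambda_{0,i}(G)\le \sum_{j=1}^{\ell}\deg^\top_{0,j}(G)\qquad(\ell\ge 1).
\]

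We induct on $|E(G)|$; the empty graph is trivial. Given $G$, we choose a vertex $v$ of maximum degree $d$ together with one edge $e=vw$ at $v$, and set $G'=G-e$. Then $\Lup_0(G)=\Lup_0(G')+x x^{\top}$, where $x$ is the signed incidence vector of $e$. Weyl's inequalities (Ky Fan for rank-one updates) give
\[
\sum_{i\le \ell}\lambda_{0,i}(G)\le \sum_{i\le\ell}\lambda_{0,i}(G')+2,
\]
and by the induction hypothesis the right side is at most $\sum_{j\le \ell}\deg^\top_{0,j}(G')+2$. Removing $e$ lowers the degrees of $v$ and $w$ by one, so $\dd_0^\top(G')$ is obtained from $\dd_0^\top(G)$ by subtracting $1$ at the positions $\deg(v)$ and $\deg(w)$. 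Consequently the partial sums drop by exactly $2$ once $\ell\ge \max(\deg v,\deg w)=d$, and the desired inequality follows for all $\ell\ge d$.

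The main obstacle is the range $\ell<d$, where the crude rank-one bound loses. We would handle it in two ways.
\begin{itemize}
\item The Schur--Horn / Ky Fan maximum principle applied to a clever test subspace is one option. Alternatively, we can bound $\sum_{i\le \ell}\lambda_{0,i}$ by $\sum_{i\le\ell}$ of the diagonal-dominant quantities, using the known inequality $\sum_{i\le \ell}\lambda_i\le \sum_{i\le\ell}\deg^\top_i$ in the form of Bai's key lemma.
\item For $\ell<d$, Bai instead deletes all edges at a suitable vertex. This writes $\Lup_0(G)=\Lup_0(G-v\text{'s star})+L(\text{star})$, where the star Laplacian has eigenvalues $d+1,1,\dots,1,0$. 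Ky Fan then gives
\[
\sum_{i\le\ell}\lambda_i(G)\le\sum_{i\le\ell}\lambda_i(G-\mathrm{star})+(d+1)+(\ell-1).
\]
The star removal decreases $\deg^\top_{0,1}$ by one, decreases the $\ell-1$ further leading conjugate entries appropriately, and removes the $v$ row. Since $\deg^\top_{0,j}$ counts vertices of degree $\ge j$, and $v$ contributes $1$ to each of $j\le d$, the conjugate partial sum drops by at least $\ell+\min(\ell,\cdot)$ contributions.
\end{itemize}

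Matching these counts is the delicate bookkeeping step. We would first try choosing $v$ of maximum degree and checking that $\sum_{j\le\ell}(\deg^\top_j(G)-\deg^\top_j(G-\mathrm{star}))\ge d+\ell$ for $\ell\le d$: the vertex $v$ itself contributes $\ell$, and the $d$ neighbours lose one degree each, contributing $d$ whenever their old degrees are $\le \ell$. If some neighbours have degree $>\ell$, we would instead apply a Schur-convexity argument, or use the diagonal majorization $\dd_0\pre\bflam_0$ to fill the gap.
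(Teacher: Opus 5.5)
The paper does not prove this statement: it quotes it from Bai's 2011 proof of the Grone--Merris conjecture and uses it as a black box in the proof of Theorem~\ref{main-gm}. Judged on its own, your proposal has a genuine gap, because the only case it actually settles has no content. By the double count used in the proof of Theorem~\ref{main-gm}, $\sum_{j\le\ell}\deg^\top_{0,j}(G)=\sum_{u}\min\{\deg u,\ell\}$. For $\ell\ge d$, with $d$ the maximum degree, this equals $\sum_u\deg u=\tr\Lup_0(G)\ge\sum_{i\le\ell}\lambda_{0,i}(G)$. So the range $\ell\ge d$ that your edge-deletion induction yields already follows from the trace identity. The whole theorem lives in the range $\ell<d$, which your proposal leaves open.

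In that range neither of your routes can close.
\begin{itemize}
\item \emph{Single edge.} Deleting an edge $e=vw$ at a maximum-degree vertex lowers $\sum_u\min\{\deg u,\ell\}$ by $1$ if $\deg w\le\ell$ and by $0$ otherwise. The rank-one step, however, only guarantees $\Phi_\ell(\Lup_0(G))-\Phi_\ell(\Lup_0(G-e))\le 2$.
\item \emph{Whole star.} Deleting the star at $v$ and using Ky Fan subadditivity, you need $\ell+|\{u\sim v:\deg u\le\ell\}|\ge d+\ell$, i.e.\ every neighbour of $v$ must have degree at most $\ell$. This is not a bookkeeping issue. For $G=K_n$ and $\ell=1$ your chain only gives $\lambda_{0,1}(K_n)\le(n-1)+n=2n-1$, against the target $n$. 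Subadditivity lets each summand pick its own optimal projection, so it cannot produce a bound that is sharp, and Grone--Merris is sharp for threshold graphs such as $K_n$.
\item \emph{The proposed repairs.} Invoking $\sum_{i\le\ell}\lambda_i\le\sum_{i\le\ell}\deg^\top_i$ ``in the form of Bai's key lemma'' is the theorem itself, so it is circular. Schur's majorization $\dd_0(G)\pre\bflam_0(G)$ gives \emph{lower} bounds on $\sum_{i\le\ell}\lambda_{0,i}(G)$, so it cannot supply an upper bound. The ``clever test subspace'' and ``Schur-convexity argument'' are never specified.
\end{itemize}

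Your edge-deletion step is useful only on the right edges. Deleting an edge between two vertices of degree $<\ell$ lowers the right side by exactly $2$. Adding an edge between two nonadjacent vertices of degree $\ge\ell$ leaves the right side unchanged, while it can only increase $\Phi_\ell$. These moves reduce the problem to split graphs, where the vertices of degree $\ge\ell$ form a clique and the rest are independent. That residual case is where the real difficulty of Bai's theorem lies: one must bound $\tr(P\Lup_0(G))$ for rank-$\ell$ projections $P$ directly, not piece by piece. Nothing in your proposal addresses it.
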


Duval and Reiner proposed a higher-dimensional majorization by the conjugate vertex-degree sequence.

\begin{conj}[Duval-Reiner \cite{DR2002}]\label{dr}
Let $K$ be an $r$-dimensional simplicial complex.
Then
\begin{equation}\label{DR-eq}
\bflam_{r-1}(K) \pre \dd_0^\top(K).
\end{equation}
\end{conj}

Very recently, Han, Lu, and Wang \cite{HLW2026} proved the first two partial-sum inequalities in a related formulation for the curl-curl operator on graphs and, more generally, for up-Laplacians of $3$-families.
In Section \ref{counter}, we show that Conjecture \ref{dr} generally fails by constructing pure complexes that violate its fifth partial-sum inequality in \eqref{DR-eq}.
While we prepare the paper, Huang \cite{Huang} also found countexamples to Duval-Reiner conjecture.
They confirmed the second partial-sum inequality in \eqref{DR-eq} and characterized the corresponding equality case.

Our first main result replaces the vertex degree sequence by the $(r-1)$-degree sequence in Conjecture \ref{dr}.
When $r=1$, Theorem \ref{main-gm} is exactly the Grone-Merris-Bai theorem.

\begin{thm}\label{main-gm}
Let $K$ be an $r$-dimensional simplicial complex.
Then
\[
\bflam_{r-1}(K) \pre \dd_{r-1}^\top(K).
\]
\end{thm}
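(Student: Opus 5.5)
The plan is to reduce the statement to a family of Ky Fan–type inequalities and prove those by induction on the number of $r$-faces, adapting Bai's argument for Theorem \ref{gmb}.

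\textbf{Reformulation.} The trace condition is immediate. We have $\operatorname{tr}\Lup_{r-1}(K)=\sum_{F\in S_r(K)}\|\p_r F\|^2=(r+1)f_r(K)$. Also $\sum_j \deg^\top_{r-1,j}(K)=\sum_{\sigma\in S_{r-1}(K)}\deg_{r-1}(K;\sigma)=(r+1)f_r(K)$, since each $r$-face has exactly $r+1$ boundary $(r-1)$-faces. For the partial sums, conjugation gives
\[
\sum_{j=1}^{\ell}\deg^\top_{r-1,j}(K)=\sum_{\sigma\in S_{r-1}(K)}\min\{\deg_{r-1}(K;\sigma),\ell\}.
\]
By Ky Fan's principle, the theorem is therefore equivalent to the following inequality. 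For every $\ell$ and every orthonormal family $u_1,\dots,u_\ell$ in $C_{r-1}(K;\R)$,
\[
\sum_{F\in S_r(K)}\sum_{k=1}^{\ell}\langle \p_r F,u_k\rangle^2\;\le\;\sum_{\sigma}\min\{\deg_{r-1}(K;\sigma),\ell\}.
\]
Here I use $\Lup_{r-1}=\sum_{F}(\p_rF)(\p_rF)^{*}$.

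\textbf{Reduction to a graph-like problem.} Only $r$-faces and their boundaries enter. Each vector $\p_rF$ is a $\pm1$ vector supported on the $r+1$ boundary faces of $F$, and these supports pairwise meet in at most one coordinate. So the problem is a sum of rank-one terms $b_Fb_F^{T}$, where each $b_F$ has $r+1$ entries equal to $\pm1$ and two distinct $b_F$ share at most one coordinate. When $r=1$ this is exactly a graph Laplacian, up to the sign switching given by orientations. Signs can be absorbed by a diagonal $\pm1$ change of basis only locally, so I would not try to remove them. Instead I would prove the majorization for the general class of matrices $M=\sum_F b_Fb_F^{T}$ with this support structure and arbitrary signs. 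This class is closed under deleting a term, which is what the induction needs.

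\textbf{Induction on $f_r$, following Bai.} Add one $r$-face $F$ at a time: $M'=M+bb^{T}$ with $\|b\|^2=r+1$. The degrees of the $r+1$ coordinates in $\operatorname{supp} b$ each increase by one. Suppose such a coordinate had old degree $d$. Then $\deg^\top_{d+1}$ increases by one, so the right-hand side sequence gains $r+1$ units placed at positions $d_\sigma+1$. On the spectral side, the $r+1$ units of trace are spread over the eigenvalues in an interlacing manner. Naive bookkeeping fails here, just as in the graph case. The hard step is to show that the eigenvalue mass can be charged so that position $\ell$ never receives more than the conjugate side does.

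I would copy Bai's strengthened inductive hypothesis. He proves a statement about $\sum_{k\le\ell}\lambda_k$ relative to a vertex subset, essentially bounding $\|U^{T}b\|^2$ by the number of incident edges whose endpoints are of degree $\ge\ell$ or lie in a chosen set. Generalized, this becomes a bound on $\sum_F\|U^{T}b_F\|^2$ by $\sum_{\sigma}\min\{\deg\sigma,\ell\}$ localized to the chosen set of $(r-1)$-faces. The step I expect to be the main obstacle is the key local estimate in this framework. It bounds the contribution of the new face by at most $\#\{\sigma\in\operatorname{supp}b:\deg\sigma<\ell\}$ plus a correction absorbed by high-degree faces. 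My first attempt would be to use that any two $b_F$ overlap in at most one coordinate; this is the exact analogue of the graph property that two edges share at most one vertex, which drives Bai's Cauchy–Schwarz step.
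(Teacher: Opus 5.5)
Your reformulation is correct and matches the paper: the trace identity, the formula $\sum_{j\le\ell}\deg^\top_{r-1,j}(K)=\sum_{\tau}\min\{\deg_{r-1}(K;\tau),\ell\}$, and the Ky Fan reduction are exactly what it uses. The gap is in the main step. The enlarged class you want to induct over is all $M=\sum_F b_Fb_F^\top$ whose $\pm1$ vectors have $(r+1)$-element supports pairwise meeting in at most one coordinate, with \emph{arbitrary signs}. This class does not satisfy the majorization, so no Bai-type inductive hypothesis can be proved for it.

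Already for $r=1$, take the triangle with $b_F=e_u+e_v$ for all three edges. Then $M=I+J$ is the signless Laplacian of $K_3$, and $\mu_1(M)=4$. Every coordinate has degree $2$, so $\deg^\top_1=3<4$. For $r=2$, the Fano plane with all signs $+$ gives $M=2I+J$ on $7$ coordinates, with $\mu_1(M)=9>7=\deg^\top_1$. So the signs coming from the boundary map are essential, not a nuisance to carry along. The enlargement is also unnecessary for closure under deletion: removing an $r$-face from $K$ leaves a simplicial complex with the same orientation conventions. Separately, the ``key local estimate'' is only announced, never proved. For $r=1$ that estimate is the entire content of Bai's theorem, so the proposal does not yet contain the core of a proof.

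The missing idea is that the local sign-absorption you mention is enough, provided you decompose first. For $\eta\in S_{r-2}(K)$, keep only the rows $\tau\supset\eta$ of $B_r(K)$. The resulting Gram matrix $\Ldown_r(K;\eta)$ is, after a diagonal $\pm1$ similarity, $\Ldown_1(\lk\eta)\oplus O$, a genuine graph edge Laplacian. Comparing entries gives $r\Ldown_r(K)=\sum_\eta\Ldown_r(K;\eta)$: on the diagonal, $r(r+1)=2\binom{r+1}{2}$; off the diagonal, a common $(r-1)$-face $\tau=\sigma_1\cap\sigma_2$ contains exactly $r$ faces $\eta$. Now combine three facts: subadditivity $\Phi_\ell(\sum_j A_j)\le\sum_j\Phi_\ell(A_j)$ from Ky Fan, Grone--Merris--Bai applied to each link graph as a black box, and $\deg_{\lk\eta}(v)=\deg_{r-1}(K;\eta\cup\{v\})$. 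Since each $\tau$ contains exactly $r$ faces $\eta$, this yields
\[
\Phi_\ell\bigl(\Lup_{r-1}(K)\bigr)\le\frac1r\sum_{\eta\in S_{r-2}(K)}\sum_{\eta\subset\tau\in S_{r-1}(K)}\min\{\deg_{r-1}(K;\tau),\ell\}=\sum_{\tau\in S_{r-1}(K)}\min\{\deg_{r-1}(K;\tau),\ell\}.
\]
This is the paper's proof, and it avoids redoing Bai's induction altogether.
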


Kothari and Tudose recently proved Brouwer's Laplacian conjecture for graphs.

\begin{thm}[Brouwer-Kothari-Tudose\cite{BH2012, KT2026}]
\label{brou}
Let $G$ be a simple graph.
Then, for every integer $\ell\geq 1$,
\[
\sum_{i = 1}^{\ell}\lambda_{0,i}(G)
\leq
f_1(G) + \binom{\ell + 1}{2}.
\]
\end{thm}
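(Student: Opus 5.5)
Theorem~\ref{brou} is quoted from \cite{BH2012, KT2026}, and the paper uses it only as the $r=1$ case to be generalized. If I had to prove it, my plan is as follows.

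\textbf{Reduction to connected graphs.} The $\ell$ largest eigenvalues of a disjoint union $G_1\cup G_2$ split as $\ell_1$ eigenvalues from $G_1$ and $\ell_2$ from $G_2$, with $\ell_1+\ell_2=\ell$. Since
\[
\binom{\ell_1+1}{2}+\binom{\ell_2+1}{2}\le\binom{\ell+1}{2},
\]
the inequality for each component implies it for the union. For the same reason I may assume $\ell < n$, since otherwise the sum of all eigenvalues is $2f_1(G)\le f_1(G)+\binom{\ell+1}{2}$ trivially.

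\textbf{Why Theorem~\ref{gmb} alone is not enough.} Theorem~\ref{gmb} gives
\[
\sum_{i\le \ell}\lambda_{0,i}(G)\le \sum_{j\le \ell}\deg^\top_{0,j}(G)=\sum_{v}\min(\deg v,\ell).
\]
This settles the inequality whenever
\[
\sum_v \min(\deg v,\ell)-f_1(G)\le \binom{\ell+1}{2},
\]
for example when few vertices have degree at most $\ell$. It fails in general: many disjoint stars $K_{1,\ell}$ violate it. So a genuinely spectral argument is needed for the remaining graphs.

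\textbf{Main approach via Ky Fan.} I would write
\[
\sum_{i\le\ell}\lambda_{0,i}=\max_P \sum_{uv\in E}(e_u-e_v)^{\!\top}P(e_u-e_v),
\]
the maximum being over rank-$\ell$ orthogonal projections $P$. Each edge term $w_{uv}=P_{uu}+P_{vv}-2P_{uv}$ lies in $[0,2]$. Hence it suffices to prove the excess bound
\[
\sum_{uv\in E}(w_{uv}-1)_+\le\binom{\ell+1}{2}.
\]
The available constraints are $\sum_u P_{uu}=\ell$, $\sum_{u,v}P_{uv}^2=\ell$, and $P\succeq 0$. An edge has $w_{uv}>1$ only if
\[
P_{uu}+P_{vv}-2P_{uv}>1,
\]
which forces at least one endpoint to have $P_{uu}$ large or $P_{uv}$ negative with substantial magnitude. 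I would bound the excess by distributing it onto vertices and pairs. The aim is to show that the "heavy" part of $P$ behaves like the Gram matrix of at most $\ell+1$ nearly orthogonal vectors, so that the edges among and out of the heavy vertices contribute at most $\binom{\ell+1}{2}$.

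\textbf{Main obstacle.} This last counting step is the heart of the matter and is exactly what was open for decades. The heavy edges form a subgraph whose spectral contribution must be controlled by the clique-like bound $\binom{\ell+1}{2}$, which is tight for $K_{\ell+1}$ plus pendant structure. I would first try an induction on $\ell$ combined with interlacing: delete a vertex $v$ maximizing $P_{vv}$, compare $P$ with a rank-$(\ell-1)$ projection on $G-v$, and aim to show that the loss is at most $\deg v+\ell$ minus the decrease in $f_1$. If that fails, the fallback is to cite \cite{KT2026} directly, as the paper itself does.
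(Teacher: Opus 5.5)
The paper does not prove this statement. It quotes it from \cite{BH2012, KT2026} and uses it only as a black box, applied to each link graph $\lk\eta$ in the proof of Theorem~\ref{main-b}. Your fallback of citing Kothari--Tudose is therefore exactly the paper's treatment.

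Your preliminary remarks are correct:
\begin{itemize}
\item the identity $\binom{\ell+1}{2}=\binom{\ell_1+1}{2}+\binom{\ell_2+1}{2}+\ell_1\ell_2$ handles disjoint unions;
\item the case $\ell\ge n$ is trivial;
\item disjoint stars show that Theorem~\ref{gmb} alone is insufficient.
\end{itemize}

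As a self-contained argument, however, the sketch has a genuine gap: nothing in it goes beyond a restatement. The excess bound is not a reduction. For a fixed projection $P$, the maximum of $\tr(PL_H)-f_1(H)$ over all graphs $H$ on $V$ is exactly $\sum_{u<v}(w_{uv}-1)_+$, attained by $H=\{uv: w_{uv}>1\}$. So requiring the excess bound for all graphs is equivalent to the theorem itself. All of the difficulty sits in the step ``the heavy part behaves like $\ell+1$ nearly orthogonal vectors,'' and you give no mechanism for it.

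The proposed induction also cannot close as stated. Write $s_\ell(H)=\sum_{i\le\ell}\lambda_{0,i}(H)$. Using the hypothesis $s_{\ell-1}(G-v)\le f_1(G-v)+\binom{\ell}{2}$ requires
\[
s_\ell(G)\le s_{\ell-1}(G-v)+\deg v+\ell \quad\text{for some vertex } v,
\]
and this is false in general. Take $\ell=2$ and let $G$ be two copies of $K_{m,m}$ joined by a single edge, so $G$ is connected.
\begin{itemize}
\item Adding the bridge is a positive semidefinite rank-one perturbation of the disjoint union. By Weyl monotonicity, $\lambda_{0,1}(G)\ge\lambda_{0,2}(G)\ge 2m$, so $s_2(G)\ge 4m$.
\item By the same argument, $\lambda_{0,1}(G)\le 2m+2$. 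For every $v$, $L(G)\succeq L(G-v)\oplus 0$ gives $\lambda_{0,1}(G-v)\le\lambda_{0,1}(G)\le 2m+2$, and $\deg v\le m+1$.
\item Hence the right-hand side is at most $3m+5$, which is less than $4m$ once $m\ge 6$.
\end{itemize}
Deleting one vertex frees only $\deg v+\ell$ from the budget, while the top eigenvalues can sit on parts of $G$ far from $v$. This holds for every $v$, so choosing $v$ to maximize $P_{vv}$ does not help. The only complete argument in your proposal is the citation.
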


In Theorem \ref{brou}, $f_1(G)$ is exactly the number of edges of $G$. Lew proposed the following higher-dimensional analog.

\begin{conj}[Lew \cite{L2025}]\label{lew}
Let $K$ be an $r$-dimensional simplicial complex.
Then, for every integer $\ell \geq 1$,
\[
 \sum_{i = 1}^{\ell}\lambda_{r-1,i}(K)
\leq f_r(K) + \binom{\ell}{2} + r\ell.
\]
\end{conj}

Our second main result gives the following weaker bound, which recovers Brouwer-Kothari-Tudose theorem when $r=1$.

\begin{thm}\label{main-b}
Let $K$ be an $r$-dimensional simplicial complex.
Then for every integer $\ell \geq 1$,
\[
\sum_{i = 1}^{\ell}\lambda_{r-1,i}(K)
\leq
\frac{r+1}{2}\,f_r(K)
+
\frac{f_{r-2}(K)}{r}
\binom{\ell+1}{2}.
\]
\end{thm}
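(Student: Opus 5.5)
The plan is to deduce Theorem~\ref{main-b} from Theorem~\ref{main-gm}. Majorization gives $\sum_{i=1}^{\ell}\lambda_{r-1,i}(K)\le \sum_{j=1}^{\ell}\deg^\top_{r-1,j}(K)$. It then remains to bound this partial sum of the conjugate degree sequence combinatorially, by working inside the links of $(r-2)$-faces.

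\textbf{Step 1 (rewrite the conjugate sum).} By the definition of the conjugate sequence,
\[
\sum_{j=1}^{\ell}\deg^\top_{r-1,j}(K)=\sum_{\sigma\in S_{r-1}(K)}\min\bigl(\deg_{r-1}(K;\sigma),\ell\bigr).
\]

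\textbf{Step 2 (localize to links).} For each $\tau\in S_{r-2}(K)$, let $G_\tau$ be the graph defined as follows:
\begin{itemize}
\item its vertex set is the set of $(r-1)$-faces $\sigma\supset\tau$;
\item two such faces $\sigma,\sigma'$ are adjacent when $\sigma\cup\sigma'\in S_r(K)$.
\end{itemize}
This is the $1$-skeleton-level part of $\lk(\tau)$, with the vertex $v$ identified with $\tau\cup\{v\}$. Two facts follow directly.
\begin{itemize}
\item The degree of $\sigma$ in $G_\tau$ equals $\deg_{r-1}(K;\sigma)$. Indeed, each $r$-face $F\supset\sigma$ yields exactly one neighbour of $\sigma$ in $G_\tau$, namely $F\setminus(\sigma\setminus\tau)$.
\item Each $r$-face contains $\binom{r+1}{2}$ faces $\tau$ of dimension $r-2$, and it contributes exactly one edge to each $G_\tau$ with $\tau\subset F$. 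Hence $\sum_\tau e(G_\tau)=\binom{r+1}{2}f_r(K)$.
\end{itemize}
Every $\sigma$ contains exactly $r$ faces $\tau$ of dimension $r-2$. Therefore
\[
\sum_{\sigma}\min(\deg\sigma,\ell)=\frac1r\sum_{\tau\in S_{r-2}(K)}\ \sum_{v\in V(G_\tau)}\min\bigl(d_{G_\tau}(v),\ell\bigr).
\]

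\textbf{Step 3 (a graph lemma).} It suffices to prove that every simple graph $G$ satisfies
\[
\sum_{v}\min(d_G(v),\ell)\le e(G)+\binom{\ell+1}{2}.
\]
Inserting this into Step 2 gives
\[
\frac1r\Bigl(\binom{r+1}{2}f_r(K)+f_{r-2}(K)\binom{\ell+1}{2}\Bigr)=\frac{r+1}{2}f_r(K)+\frac{f_{r-2}(K)}{r}\binom{\ell+1}{2},
\]
which is exactly the claimed bound. For $r=1$ the statement is consistent with Theorem~\ref{brou}, since there $f_{-1}=1$ and $G_\emptyset=G$.

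\textbf{Main obstacle: the graph lemma.} This is the conjugate-degree form of Brouwer's bound, and I would prove it directly by counting. Order the vertices $v_1,\dots,v_n$ by nonincreasing degree. For each $v_i$ write $d_G(v_i)=b_i+f_i$, where $b_i$ counts neighbours earlier in the order and $f_i$ counts neighbours later in the order, so that $\sum_i b_i=e(G)$. Then
\[
\min(d_G(v_i),\ell)\le b_i+\min\bigl(f_i,(\ell-b_i)^+\bigr).
\]
The task is to show that the excess terms $\min(f_i,(\ell-b_i)^+)$ sum to at most $\binom{\ell+1}{2}$.

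The first approach I would try is to charge each excess unit at $v_i$ to a forward edge from $v_i$. A vertex can carry excess only while $b_i<\ell$. Vertices of large index are constrained by the degree ordering: a later vertex whose degree is at most $\ell$ has its degree realized by its back-edges, so it contributes little excess. I expect this charging to force the excesses into a triangular pattern $\ell,\ell-1,\dots,1$.

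If the charging becomes delicate, the fallback is an Erd\H{o}s--Gallai-type argument. Let $B$ be the set of the $k$ vertices of degree $\ge\ell$. Then bound $k\ell+\sum_{v\notin B}d_G(v)$ against $e(G)$ using $e(G)\ge e(B)+e(B,\overline B)+e(\overline B)$, and optimize over $k$.

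If a standard reference for this inequality is available (it is the degree-sequence shadow of Theorem~\ref{brou}), I would cite it instead.
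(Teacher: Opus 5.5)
\textbf{There is a genuine gap: the graph lemma in Step 3 is false, and the detour through Theorem~\ref{main-gm} cannot work.} Your Steps 1 and 2 are correct; they are the same double counting the paper uses for Theorem~\ref{main-gm}.

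The lemma fails already for $G=2K_2$ and $\ell=1$: $\sum_v\min(d_G(v),1)=4$, whereas $e(G)+\binom{2}{2}=3$. More generally, every $d$-regular graph on $n>d+1$ vertices violates it at $\ell=d$, since $nd>\tfrac{nd}{2}+\binom{d+1}{2}$; for example $C_4$ with $\ell=2$ gives $8>7$. Your charging scheme breaks here because the first vertex of each component has $b_i=0$. So every component contributes its own block of excess, and nothing forces a single triangular pattern $\ell,\ell-1,\dots,1$.

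This is not an artifact of localizing to links: the intermediate quantity itself can exceed the target bound.
\begin{itemize}
\item For $r=1$, $K=2K_2$ and $\ell=1$, one has $\deg^\top_{0,1}(K)=4>3=f_1(K)+1$.
\item For $r=2$, take $K$ to be the boundary of the octahedron ($f_0=6$, $f_2=8$, all twelve edges of degree $2$) and $\ell=2$. Then $\deg^\top_{1,1}(K)+\deg^\top_{1,2}(K)=24$, while the right-hand side of Theorem~\ref{main-b} is $12+9=21$.
\end{itemize}
So no argument of the form ``Theorem~\ref{main-gm} plus a combinatorial bound on $\sum_{j\le\ell}\deg^\top_{r-1,j}(K)$'' can prove the statement. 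The conjugate-degree bound and the Brouwer-type bound are incomparable. Indeed, if your lemma were true, Theorem~\ref{brou} would be an immediate corollary of Theorem~\ref{gmb}.

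The missing idea is that Theorem~\ref{brou} must be applied to the eigenvalues of the link graphs, not to their degree data. The paper does this in four steps.
\begin{itemize}
\item It proves $r\Ldown_r(K)=\sum_{\eta\in S_{r-2}(K)}\Ldown_r(K;\eta)$ (Lemma~\ref{up-decom}).
\item It identifies $\Ldown_r(K;\eta)$ with $\Ldown_1(\lk\eta)\oplus O$, up to relabeling and a signature similarity (Lemma~\ref{gram-entries}).
\item Ky Fan subadditivity (Lemma~\ref{ky-sum}) then gives $\sum_{i\le\ell}\lambda_{r-1,i}(K)\le\frac1r\sum_\eta\Phi_\ell(\Lup_0(\lk\eta))$ (Lemma~\ref{com-graph}).
\item Only at this point does it invoke Theorem~\ref{brou} on each link: $\Phi_\ell(\Lup_0(\lk\eta))\le|S_1(\lk\eta)|+\binom{\ell+1}{2}$.
\end{itemize}
Your edge count $\sum_\eta|S_1(\lk\eta)|=\binom{r+1}{2}f_r(K)$ then finishes exactly as in your final arithmetic. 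The decomposition must stay spectral all the way to the point where Brouwer's inequality is applied; passing through conjugate degrees in between loses too much.
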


To compare the two right-hand sides, note that $f_r(K)\geq 1$ and $f_{r-2}(K)\geq\binom{r+1}{2}$.
Consequently, the right-hand side in Theorem~\ref{main-b} exceeds the conjectured right-hand side of Conjecture \ref{lew} by at least
\[
\frac{(r-1)(\ell-1)(\ell-2)}{4}\geq 0.
\]

In particular, taking $\ell=1$, we get an upper bound for the largest $(r-1)$-dimensional up Laplacian eigenvalue.

\begin{coro}
Let $K$ be an $r$-dimensional simplicial complex.
Then,
\[
\lambda_{r-1,1}(K)
\leq
\frac{r+1}{2}\,f_r(K) + \frac{f_{r-2}(K)}{r}.
\]
\end{coro}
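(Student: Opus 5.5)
This Corollary is the case $\ell=1$ of Theorem~\ref{main-b}, so the plan is simply to specialize that theorem. Setting $\ell=1$ in Theorem~\ref{main-b}, the left-hand side $\sum_{i=1}^{\ell}\lambda_{r-1,i}(K)$ reduces to the single term $\lambda_{r-1,1}(K)$. This is the largest eigenvalue of $\Lup_{r-1}(K)$, since the spectrum $\bflam_{r-1}(K)$ is listed in nonincreasing order. On the right-hand side we have $\binom{\ell+1}{2}=\binom{2}{2}=1$. The bound therefore becomes
\[
\lambda_{r-1,1}(K)\le \frac{r+1}{2}f_r(K)+\frac{f_{r-2}(K)}{r},
\]
which is exactly the claimed inequality.

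Two points need checking. First, $f_{r-1}(K)\ge 1$ whenever $r\ge 1$ and $K$ has an $r$-face, so $\lambda_{r-1,1}(K)$ is well defined. Second, the hypotheses of Theorem~\ref{main-b} are the same as here: $K$ is an arbitrary $r$-dimensional complex, and $\ell=1$ is an admissible integer. No further argument is needed. All the real difficulty sits in Theorem~\ref{main-b} itself, which the paper establishes later and which we take as given here.
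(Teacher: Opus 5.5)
Your proposal is correct and is exactly the paper's argument: the corollary is obtained by setting $\ell=1$ in Theorem~\ref{main-b}, so that $\binom{\ell+1}{2}=1$ and the left-hand side reduces to $\lambda_{r-1,1}(K)$. Your side checks on well-definedness and on the hypotheses are harmless; the zero-padding convention makes the first one unnecessary anyway.
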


The proofs of both main theorems rely on a relation between 
the $r$-dimension down Laplacian of $K$ and the Laplacian of the link graphs with respect to $(r-2)$-faces.
Indeed, we prove that
$$ r\Ldown_r(K)=\sum_{\eta \in S_{r-2}(K)} \Ldown_r(K;\eta),$$
where $ \Ldown_r(K;\eta)$ is a local $r$-dimensional down Laplacian defined on the $r$-faces of $K$ containing an $(r-2)$-face $\eta$.
Note that for the $(r-2)$-face $\eta$, its link $\lk \eta$ is a graph.
We show that, up to a signed permutation,
 $$ \Ldown_r(K;\eta)=\Ldown_1(\lk \eta) \oplus O.$$
As  $\Lup_0(\lk \eta)$ shares the same nonzero eigenvalues with $\Ldown_1(\lk \eta)$, we can apply Theorems \ref{gmb} and \ref{brou} to the graph $\lk \eta$, and get the desired results.

The paper is organized as follows.
Section \ref{pre} introduces the Laplacian of simplicial complexes and some linear algebra results on the partial sum of eigenvalues.
Section \ref{graph-decom} establishes a decomposition of the down Laplacian in terms of local down Laplacians.
Section \ref{main-proof} proves Theorems \ref{main-gm} and \ref{main-b}.
Section \ref{counter} constructs counterexamples to Conjecture \ref{dr}.

\section{Preliminaries}\label{pre}

\subsection{Laplacian of simplicial complexes}
Let $K$ be a finite abstract simplicial complex with a linearly ordered vertex set $V=V(K)$.
We orient each face by the induced increasing order on its vertices.
For a face $\sigma \in K$, write $[\sigma]$ the oriented face of $\sigma$.
For $i\geq 0$, the real $i$-chain space $C_i(K;\R)$ is the vector space over $\R$ spanned by all oriented $i$-faces of $K$, and we set $C_{-1}(K;\R)=\R$.
The $i$-th boundary map $\p_i:C_i(K;\R)\to C_{i-1}(K;\R)$ is defined by
\[
\p_i[v_0,\ldots,v_i]
=
\sum_{j=0}^i(-1)^j[v_0,\ldots,\widehat v_j,\ldots,v_i],
\]
where $\widehat v_j$ indicates that $v_j$ is omitted.
For $\tau \in S_{i - 1}(K)$ and $\sigma \in S_i(K)$, let $([\sigma]:[\tau])$  be the sign of $[\tau]$ appeared in $\p_i [\sigma]$, and set $([\sigma]: [\tau]) = 0$ if $\tau \not\subset \sigma$.
Thus map $\p_i$ is represented by the \emph{boundary matrix}
$B_i(K) \in \R^{f_{i-1}(K) \times f_i(K)}$,
whose entries are given by
\begin{equation}\label{BouMat}
(B_i(K))_{\tau,\sigma}
=
([\sigma]:[\tau]).
\end{equation}
The identities $\p_i\p_{i+1}=0$ give the augmented chain complex
\[
\cdots \xrightarrow{\p_{i+1}} C_i(K;\R) \xrightarrow{\p_i} C_{i-1}(K;\R)
\to\cdots \xrightarrow{\p_1} C_0(K;\R) \xrightarrow{\p_0} C_{-1}(K;\R) \to 0.
\]

For each $i \geq 0$, equip $C_i(K;\R)$ with a positive definite inner product that makes the basis consisting of oriented $i$-faces orthonormal.
The adjoint operator $\p_i^*: C_{i-1}(K;\R) \to C_{i}(K;\R)$ of $\p_i$ is defined by
\[
\langle \p_i g_1, g_2 \rangle_{C_{i-1}(K;\R)} = \langle g_1, \p_i^* g_2\rangle_{C_i(K;\R)}
\]
for all $g_1 \in C_i(K;\R)$, $g_2 \in C_{i-1}(K;\R)$.
The \emph{$i$-dimensional up Laplacian} and \emph{$i$-dimensional down Laplacian} are, respectively, defined by
\[
\Lup_{i}(K) :=\p_{i+1}\p_{i+1}^* = B_{i+1}(K) B_{i+1}(K)^\top,
\quad
\Ldown_{i}(K) :=\p_{i}^*\p_{i} = B_i(K)^\top B_i(K).
\]

When $K$ is a one-dimensional complex, that is, a graph, then $\Lup_{0}(K)$ is exactly the usual Laplacian of a graph,
and $\Ldown_1(K)$ is the edge Laplacian of $K$.

\subsection{Partial sum of eigenvalues}
For a positive semidefinite matrix $A\in\R^{n\times n}$, let
\[
\mu_1(A)\geq\cdots\geq\mu_n(A)\geq 0
\]
be its eigenvalues, and set $\mu_i(A)=0$ for $i>n$.
For every integer $\ell\geq 1$, define
\[
\Phi_\ell(A):=\sum_{i=1}^{\ell}\mu_i(A).
\]

\begin{lem}[Ky Fan \cite{F1949}]\label{ky}
Let $A\in\R^{n\times n}$ be a symmetric matrix with eigenvalues
$\mu_1(A)\geq\mu_2(A)\geq\cdots\geq\mu_n(A)$.
Then, for every integer $1\leq\ell\leq n$,
\[
\sum_{i=1}^{\ell}\mu_i(A)
=
\max_{\substack{U\in\R^{n\times\ell} \\ U^T U=I_\ell}} \tr(U^\top A U)
=
\max_{\substack{P=P^\top=P^2 \\ \rank(P)=\ell}} \tr(P A).
\]
\end{lem}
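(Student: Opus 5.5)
The plan is to prove the two equalities in the statement one after the other: first the maximum over orthonormal $n\times\ell$ matrices, then the maximum over rank-$\ell$ orthogonal projections, which is just a reformulation of the first.

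For the first equality, diagonalize $A=Q\Lambda Q^\top$ with $Q$ orthogonal and $\Lambda=\mathrm{diag}(\mu_1,\dots,\mu_n)$.

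\emph{Attainment.} Take $U$ to be the first $\ell$ columns of $Q$. Then $\tr(U^\top AU)=\sum_{i\le\ell}\mu_i$, so the maximum is at least the partial sum.

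\emph{Upper bound.} Let $U$ be any matrix with $U^\top U=I_\ell$, and set $W=Q^\top U$, which also has orthonormal columns. Then
\[
\tr(U^\top AU)=\tr(W^\top\Lambda W)=\sum_{i=1}^n \mu_i w_i,\qquad w_i:=\sum_{j=1}^{\ell}W_{ij}^2 .
\]
The weights satisfy two constraints.
\begin{itemize}
\item $\sum_i w_i=\tr(W^\top W)=\ell$.
\item $0\le w_i\le 1$, because $WW^\top$ is an orthogonal projection and $w_i=(WW^\top)_{ii}$ is one of its diagonal entries.
\end{itemize}
Maximizing the linear function $\sum_i\mu_i w_i$ over these constraints is a rearrangement (or exchange) argument. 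Write
\[
\sum_i \mu_i w_i-\sum_{i\le\ell}\mu_i=\sum_{i\le\ell}\mu_i(w_i-1)+\sum_{i>\ell}\mu_i w_i .
\]
For $i\le\ell$ we have $\mu_i\ge\mu_\ell$ and $w_i-1\le 0$, so the first sum is at most $\mu_\ell\sum_{i\le\ell}(w_i-1)$. For $i>\ell$ we have $\mu_i\le\mu_\ell$ and $w_i\ge 0$, so the second sum is at most $\mu_\ell\sum_{i>\ell}w_i$. Adding these gives at most $\mu_\ell\bigl(\sum_i w_i-\ell\bigr)=0$, which is the required upper bound.

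For the second equality, note that every rank-$\ell$ orthogonal projection can be written as $P=UU^\top$ with $U^\top U=I_\ell$. Conversely, every such $U$ gives a rank-$\ell$ projection $UU^\top$. Since $\tr(PA)=\tr(UU^\top A)=\tr(U^\top AU)$, the two maxima range over the same set of values and therefore coincide.

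The step that needs the most care is the upper bound, specifically justifying $w_i\le 1$. The cleanest route is to extend $W$ to a full orthogonal matrix; then each row of that matrix has unit norm, and $w_i$ is the squared norm of part of the $i$-th row. After that, the rest is elementary.
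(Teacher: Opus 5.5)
Your proof is correct and complete. The paper gives no proof of this lemma; it quotes it from Ky Fan's 1949 paper, so there is no argument of the paper's to compare against.

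Your argument is the standard self-contained one. After diagonalizing, you write $\tr(U^\top AU)=\sum_i\mu_i w_i$ with $w_i=(WW^\top)_{ii}\in[0,1]$ and $\sum_i w_i=\ell$. Your exchange estimate against $\mu_\ell$ then bounds this by $\sum_{i\le\ell}\mu_i$, and it needs no sign assumption on the eigenvalues, which matters since the lemma is stated for arbitrary symmetric $A$. The step $P=UU^\top$ correctly identifies the two maxima, and that projection form is the one the paper actually uses in Lemma~\ref{ky-sum}.

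An equally short alternative is Cauchy interlacing applied to the compression $U^\top AU$. It gives $\mu_i(U^\top AU)\le\mu_i(A)$ for $i\le\ell$, and hence the upper bound at once. Your weight argument has the advantage of being fully elementary and not relying on the interlacing theorem.
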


\begin{lem}\label{ky-sum}
Let $A_1,\ldots,A_t$ be positive semidefinite matrices of the same order $n$.
Then, for every integer $\ell \geq 1$,
\begin{equation}\label{eq-ky-sum}
\Phi_\ell\left(\sum_{j = 1}^t A_j\right)
\leq \sum_{j = 1}^t \Phi_\ell(A_j).
\end{equation}
\end{lem}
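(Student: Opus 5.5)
We will derive Lemma~\ref{ky-sum} directly from the projection form of Ky Fan's principle (Lemma~\ref{ky}).

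First we reduce to the case $1\le \ell\le n$. If $\ell>n$, then by the padding convention $\Phi_\ell(A)=\Phi_n(A)=\tr(A)$ for every positive semidefinite $A$. Hence the inequality becomes $\tr\bigl(\sum_j A_j\bigr)\le \sum_j \tr(A_j)$, which holds with equality by linearity of the trace. Positive semidefiniteness is what makes this padding consistent: the eigenvalues are nonnegative, so appending zeros does not disturb the nonincreasing order.

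Now let $1\le\ell\le n$. The sum $A=\sum_{j=1}^t A_j$ is symmetric, and in fact positive semidefinite, so Lemma~\ref{ky} applies to it. We choose an orthogonal projection $P$ of rank $\ell$ that attains the maximum, so that $\Phi_\ell(A)=\tr(PA)$. For instance, take $P$ to be the projection onto the span of eigenvectors for $\mu_1(A),\dots,\mu_\ell(A)$. By linearity of the trace,
\[
\Phi_\ell(A)=\tr(PA)=\sum_{j=1}^t \tr(PA_j).
\]
Since this same $P$ is an admissible projection of rank $\ell$ for each $A_j$, Lemma~\ref{ky} gives $\tr(PA_j)\le \Phi_\ell(A_j)$ for every $j$. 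Summing over $j$ yields \eqref{eq-ky-sum}.

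There is no real obstacle in this argument. The only points needing care are the boundary case $\ell>n$, handled by the padding convention above, and the observation that the maximum in Lemma~\ref{ky} is attained, which follows from compactness or from the explicit eigenvector construction. The result is simply the statement that a supremum of linear functionals is subadditive.
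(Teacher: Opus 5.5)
Your proposal is correct and follows essentially the same route as the paper. Both arguments dispose of the large-$\ell$ case by additivity of the trace, and otherwise use the projection form of Ky Fan's principle. Your choice of a fixed optimal $P$ for $\sum_j A_j$ is just an explicit version of the paper's step $\max_P \sum_j \tr(PA_j) \leq \sum_j \max_P \tr(PA_j)$.
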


\begin{proof}
Suppose that $1 \leq \ell < n$.
By Lemma \ref{ky}, for every real symmetric matrix $A$ of size $n \times n$,
\begin{equation}\label{eq-ky}
\Phi_\ell(A)
= \max\{\tr(PA) : P = P^\top = P^2,\ \rank(P) = \ell\}.
\end{equation}
Applying \eqref{eq-ky} to $\sum_{j = 1}^t A_j$ gives
\[
\Phi_\ell\left(\sum_{j = 1}^t A_j\right)
= \max_P \sum_{j = 1}^t \tr(PA_j)
\leq \sum_{j = 1}^t \max_P \tr(PA_j)
= \sum_{j = 1}^t \Phi_\ell(A_j).
\]

Now assume that $\ell \geq n$.
Since each $A_j$ is positive semidefinite and its spectrum is padded with zeros, $\Phi_\ell(A_j) = \tr(A_j)$.
The same identity holds for $\sum_{j = 1}^t A_j$.
Therefore, the additivity of the trace shows that \eqref{eq-ky-sum} holds with equality in this case.
\end{proof}

\section{Decomposition of Laplacian}\label{graph-decom}
Let $K$ be an $r$-dimensional complex, and let $\sigma$ be a face of $K$.
The \emph{link} of $\sigma$, denoted by $\lk \sigma$, is defined as
$$ \lk \sigma=\{\tau \in K: \tau \cap \sigma = \emptyset, \tau \cup \sigma \in K\}.$$
In particular, if $\eta \in S_{r - 2}(K)$, then
$\lk \eta$ is a graph.
So, we have the $1$-dimensional down Laplaican $\Ldown_1(\lk \eta)$, i.e. the edge-Laplacian of the graph $\lk \eta$.
Indeed, for every $\eta \in S_{r - 2}(K)$, the boundary matrix $B_1(\lk \eta) \in \R^{f_0(\lk \eta) \times f_1(\lk \eta)}$ is given by
\[
(B_1(\lk \eta))_{v,e}=
\begin{cases}
([e],[v]), & \text{if } v \in e,\\
0, & \text{otherwise},
\end{cases}
\]
where $v \in S_0(\lk \eta)$ and $e \in S_1(\lk \eta)$.
Thus,
$$ \Ldown_1(\lk \eta)=B_1(\lk \eta)^\top B_1(\lk \eta).$$

We now extend $B_1(\lk \eta) \in \R^{f_0(\lk \eta) \times f_1(\lk \eta)}$ to a matrix $B_k(K;\eta) \in \R^{f_{r - 1}(K) \times f_r(K)}$, which is defined  by
\[
(B_k(K;\eta))_{\tau,\sigma}
=
\begin{cases}
([\sigma] : [\tau]), & \text{if } \eta \subset \tau \subset \sigma,\\
0, & \text{otherwise},
\end{cases}
\]
where $\tau \in S_{r-1}(K)$ and $\sigma \in S_r(K)$.
The matrix 
$$\Ldown_r(K;\eta):= B_k(K;\eta)^\top B_k(K;\eta)$$
 is a local $r$-dimensional down Laplaican defined on the $r$-faces of $K$ containing the face $\eta$.

\begin{lem}\label{gram-entries}
Let $K$ be an $r$-dimensional complex.
Then, for any $\sigma_1,\sigma_2 \in S_r(K)$, 
\[
(\Ldown_r(K;\eta))_{\sigma_1,\sigma_2}
=
\begin{cases}
2, & \text{if } \sigma_1 = \sigma_2 \text{ and } \eta \subset \sigma_1, \\
([\sigma_1] : [\tau])([\sigma_2] : [\tau]),
& \text{if } \sigma_1 \ne \sigma_2,\ \tau = \sigma_1 \cap \sigma_2 \in S_{r - 1}(K), \eta \subset \tau, \\
0, & \text{otherwise}.
\end{cases}
\]
Moreover, after reordering the vertices and relabeling $r$-faces of $K$, 
$$ \Ldown_r(K;\eta)= \Ldown_1(\lk \eta) \oplus O.$$
\end{lem}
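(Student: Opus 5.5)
The entry formula will come straight from the definition $\Ldown_r(K;\eta)=B_k(K;\eta)^\top B_k(K;\eta)$. Its $(\sigma_1,\sigma_2)$ entry is
\[
\sum_{\tau}(B_k(K;\eta))_{\tau,\sigma_1}(B_k(K;\eta))_{\tau,\sigma_2},
\]
where the sum is over $(r-1)$-faces $\tau$ with $\eta\subset\tau\subset\sigma_1$ and $\tau\subset\sigma_2$.

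For the diagonal entries ($\sigma_1=\sigma_2=\sigma$), each term equals $([\sigma]:[\tau])^2=1$. So the entry counts the $(r-1)$-faces $\tau$ with $\eta\subset\tau\subset\sigma$. If $\eta\subset\sigma$, then $\sigma\setminus\eta$ has exactly two vertices, and $\tau$ is obtained by deleting one of them, which gives $2$. Otherwise the count is $0$.

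For the off-diagonal entries ($\sigma_1\ne\sigma_2$), two distinct $r$-faces share at most one $(r-1)$-face, namely $\sigma_1\cap\sigma_2$ when it has size $r$. So the sum has at most one term. That term is present exactly when $\tau=\sigma_1\cap\sigma_2\in S_{r-1}(K)$ and $\eta\subset\tau$, and then it equals $([\sigma_1]:[\tau])([\sigma_2]:[\tau])$. In all other cases the entry is $0$.

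For the decomposition, I will reorder the $r$-faces so that those containing $\eta$ come first. Every row and column indexed by an $r$-face not containing $\eta$ is zero, which produces the $O$ block. The $r$-faces containing $\eta$ correspond bijectively to the edges $e=\sigma\setminus\eta$ of $\lk\eta$. I will reorder the vertices of $V$ so that the vertices of $\eta$ come first, i.e. precede all other vertices. Write $\eta=\{w_1<\dots<w_{r-1}\}$ and $e=\{u<v\}$. Then $[\sigma]=[w_1,\dots,w_{r-1},u,v]$. Deleting $v$ gives the sign $(-1)^{r}$, and deleting $u$ gives $(-1)^{r-1}$. Meanwhile $\p_1[u,v]=[v]-[u]$, so $([e]:[v])=1$ and $([e]:[u])=-1$. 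Hence, with $\tau=\eta\cup\{x\}$ for $x\in e$,
\[
([\sigma]:[\tau])=(-1)^{r}([e]:[x]).
\]
The map $\tau\mapsto\tau\setminus\eta$ identifies the nonzero rows of $B_k(K;\eta)$ with the vertices of $\lk\eta$. Under these identifications,
\[
B_k(K;\eta)=(-1)^r\bigl(B_1(\lk\eta)\oplus O\bigr)
\]
after permuting rows and columns. The global sign cancels in the product $B_k^\top B_k$, which gives $\Ldown_r(K;\eta)=\Ldown_1(\lk\eta)\oplus O$.

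The only delicate step is this sign bookkeeping. If one does not reorder vertices, the signs instead differ by a factor depending on $e$, i.e. a diagonal $\pm1$ matrix $D$, and one gets a signed-permutation similarity $D\,\Ldown_1(\lk\eta)D$. That is still orthogonal and spectrum-preserving, so it suffices for the later eigenvalue arguments either way.
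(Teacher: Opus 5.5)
Your proposal is correct and follows essentially the same route as the paper. You compute the entries directly from $B_k(K;\eta)^\top B_k(K;\eta)$, then place the vertices of $\eta$ first and compare incidence signs, with a signature-matrix similarity covering a general order. The only difference is that you compare $B_k(K;\eta)$ with $B_1(\lk\eta)$ before multiplying, whereas the paper compares the Laplacian entries; the two are equivalent.

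There is one harmless slip. Since $\tau=\eta\cup\{x\}$ arises by deleting the \emph{other} endpoint of $e$, the correct relation is $([\sigma]:[\tau])=(-1)^{r-1}([e]:[x])$, as in the paper, not $(-1)^r$; for $r=1$ the factor must be $+1$. Because this sign is the same for every entry, it cancels in $B_k(K;\eta)^\top B_k(K;\eta)$ and your conclusion is unaffected.
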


\begin{proof}
By the definition of $D_\eta$, we have
\[
(\Ldown_r(K;\eta))_{\sigma_1,\sigma_2}
= \sum_{\tau \in S_{r - 1}(K)}
(B_k(K;\eta))_{\tau,\sigma_1}(B_k(K;\eta))_{\tau,\sigma_2}
= \sum_{\substack{\tau \in S_{r - 1}(K)\\
\eta \subset \tau \subset \sigma_1 \cap \sigma_2}}
([\sigma_1] : [\tau])([\sigma_2] : [\tau]).
\]

First, suppose that  $\sigma_1 = \sigma_2 = \sigma$.
If $\eta \not\subset \sigma$, then the sum is empty.
If $\eta \subset \sigma$, then exactly two $(r - 1)$-dimensional faces $\tau$ satisfy $\eta \subset \tau \subset \sigma$.
Each of these faces contributes $([\sigma] : [\tau])^2 = 1$, so the diagonal entry is $2$.

Now suppose that $\sigma_1 \ne \sigma_2$.
Any $\tau$ occurring in the sum is a common $(r - 1)$-dimensional face of $\sigma_1$ and $\sigma_2$.
 This face exists if and only if $\tau = \sigma_1 \cap \sigma_2 \in S_{r - 1}(K)$ and $\eta \subset \tau$.
In this case, the sum consists of the single term $([\sigma_1] : [\tau])([\sigma_2] : [\tau])$.
In all other cases, the sum is empty and the corresponding entry is $0$.

Assume that $\eta \subset \sigma$, and write $e=\sigma \setminus \in S_1(\lk \eta)$.
It is easy to verify
$$ (\Ldown_1(\lk \eta))_{e,e}=(\Ldown_r(K;\eta))_{\sigma, \sigma}=2.$$
Suppose that $\eta  \subset \sigma_1$, $\eta \subset \sigma_2$, $\sigma_1 \ne \sigma_2$, and $\tau=\sigma_1 \cap \sigma_2 \in S_{r-1}(K)$.
Write $e_1=\sigma_1 \setminus \eta, e_2 =\sigma_2 \setminus \eta$, and $\{v\}=e_1 \cap e_2$.
In addition, assume that each vertex of $\eta$ is smaller than any vertex of $e_1 \cup e_2$ with respect to the liner order of the vertices of $K$. 
Thus, by the above discussion, 
\[
\begin{split}
(\Ldown_r(K;\eta))_{\sigma_1,\sigma_2} & =
([\sigma_1] : [\tau])([\sigma_2] : [\tau])\\ 
&=([\eta \cup e_1]: [\eta \cup \{v\}])([\eta \cup e_2]: [\eta \cup \{v\}]\\
&= (-1)^{r-1} ([e_1]: [v]) \cdot (-1)^{r-1}([e_2]: [v])\\
& = ([e_1]: [v]) ([e_2]: [v])\\
&= (\Ldown_1(\lk \eta))_{e_1,e_2}.
\end{split}
\]
So, after relabeling $r$-faces of $K$, we have 
$$ \Ldown_r(K;\eta)= \Ldown_1(\lk \eta) \oplus O.$$

In general, if the orders of the vertices of $\eta$ do not satisfy the above requirement, by \cite[Lemma 3.2]{FSZ2025}, the Laplacian $\Ldown_r(K;\eta)$ in this case is similar to $\Ldown_1(\lk \eta) \oplus O$ via a signature matrix (a diagonal matrix with $\pm 1$ on the diagonal).
\end{proof}

\begin{lem}\label{up-decom}
Let $K$ be an $r$-dimensional complex. 
Then
\begin{equation}\label{eq-up-decom}
r \Ldown_r(K) = \sum_{\eta \in S_{r - 2}(K)} \Ldown_r(K; \eta).
\end{equation}
\end{lem}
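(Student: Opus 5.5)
The plan is to compare the two sides of \eqref{eq-up-decom} entrywise. Lemma \ref{gram-entries} already gives the entries of every $\Ldown_r(K;\eta)$, so it only remains to compute the entries of $\Ldown_r(K)=B_r(K)^\top B_r(K)$ and count how many $(r-2)$-faces $\eta$ contribute to each entry.

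First compute $\Ldown_r(K)$ directly from \eqref{BouMat}:
\[
(\Ldown_r(K))_{\sigma_1,\sigma_2}=\sum_{\tau\in S_{r-1}(K),\ \tau\subset\sigma_1\cap\sigma_2}([\sigma_1]:[\tau])([\sigma_2]:[\tau]).
\]
If $\sigma_1=\sigma_2=\sigma$, the entry is $r+1$, the number of facets of the $r$-simplex $\sigma$. If $\sigma_1\neq\sigma_2$, the entry is $([\sigma_1]:[\tau])([\sigma_2]:[\tau])$ when $\tau=\sigma_1\cap\sigma_2\in S_{r-1}(K)$, and $0$ otherwise.

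Next sum the right-hand side of \eqref{eq-up-decom} entrywise using Lemma \ref{gram-entries}.

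\emph{Diagonal entries.} $\Ldown_r(K;\eta)_{\sigma,\sigma}$ equals $2$ exactly when $\eta\subset\sigma$, and $0$ otherwise. The number of $(r-2)$-faces of $\sigma$ is $\binom{r+1}{r-1}=\binom{r+1}{2}$, and all of them lie in $K$ since $K$ is closed under inclusion. The sum is therefore $2\binom{r+1}{2}=r(r+1)=r\cdot(\Ldown_r(K))_{\sigma,\sigma}$.

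\emph{Off-diagonal entries with $\tau=\sigma_1\cap\sigma_2\in S_{r-1}(K)$.} Each $\eta$ with $\eta\subset\tau$ contributes the same value $([\sigma_1]:[\tau])([\sigma_2]:[\tau])$. The number of such $\eta$ is the number of $(r-2)$-faces of the $(r-1)$-simplex $\tau$, which is $\binom{r}{r-1}=r$. The sum is therefore $r$ times the corresponding entry of $\Ldown_r(K)$.

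\emph{Remaining entries.} Both sides vanish. Note that the sign term in Lemma \ref{gram-entries} is the same incidence sign $([\sigma]:[\tau])$ taken in $K$ itself, with no relabeling. So the comparison is with the global orientation, and the signed-permutation part of Lemma \ref{gram-entries} is not needed here.

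The main point to check carefully is exactly this sign bookkeeping. The definition of $B_k(K;\eta)$ uses the same entries as $B_r(K)$, only restricted to the $\tau$ with $\eta\subset\tau$. Equivalently, $B_k(K;\eta)=D_\eta B_r(K)$, where $D_\eta$ is the diagonal $0/1$ matrix selecting the $(r-1)$-faces containing $\eta$. One can then write the identity more slickly:
\[
\sum_\eta B_r^\top D_\eta B_r = B_r^\top\Bigl(\sum_\eta D_\eta\Bigr)B_r = B_r^\top (rI) B_r,
\]
because every $(r-1)$-face contains exactly $r$ faces of dimension $r-2$. I would present this matrix argument as the proof, with the entrywise computation above serving as a check.
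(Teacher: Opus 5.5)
Your proposal is correct. The entrywise computation you give is exactly the paper's proof: it reads off the entries of each $\Ldown_r(K;\eta)$ from Lemma~\ref{gram-entries}, gets $2\binom{r+1}{2}=r(r+1)$ on the diagonal, and uses that $\tau=\sigma_1\cap\sigma_2$ has exactly $r$ faces of dimension $r-2$ off the diagonal.

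The factorization you propose to present instead is a genuinely different and cleaner route. You write $B_k(K;\eta)=D_\eta B_r(K)$, with $D_\eta$ the $0/1$ diagonal projection onto the $(r-1)$-faces containing $\eta$. Since $D_\eta^2=D_\eta$, this gives $\Ldown_r(K;\eta)=B_r(K)^\top D_\eta B_r(K)$. The lemma then reduces to the single count $\sum_{\eta\in S_{r-2}(K)}D_\eta=rI$ on $C_{r-1}(K;\R)$.

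This bypasses Lemma~\ref{gram-entries} and the case analysis. It also makes the sign bookkeeping automatic, since the signs are literally those of $B_r(K)$. It generalizes at once as well: summing over $j$-faces $\eta$ in place of $(r-2)$-faces gives the factor $\binom{r}{j+1}$ instead of $r$.

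The paper's entrywise route records the explicit entries of $\Ldown_r(K;\eta)$. The paper needs those entries anyway to identify $\Ldown_r(K;\eta)$ with $\Ldown_1(\lk\eta)\oplus O$. For the identity in Lemma~\ref{up-decom} itself, your matrix argument is the more economical proof.
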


\begin{proof}
It suffices to compare the entries of the two sides of \eqref{eq-up-decom}.
Let $B_r(K)$ be the boundary matrix defined as in \eqref{BouMat}.
Then $\Ldown_r(K)=B_r(K)^\top B_r(K)$.
First fix $\sigma \in S_r(K)$.
The face $\sigma$ has exactly $r + 1$ faces of dimension $r - 1$, hence 
$$(\Ldown_r(K))_{\sigma,\sigma}=(B_r(K)^\top B_r(K))_{\sigma,\sigma} = r + 1.$$
So, the $(\sigma,\sigma)$-entry on the left-hand side of \eqref{eq-up-decom} is $r(r + 1)$.
The face $\sigma$ contains exactly $\binom{r + 1}{2}$ faces $\eta$ of dimension $r - 2$.
For each such face $\eta$, Lemma \ref{gram-entries} gives $(\Ldown_r(K; \eta))_{\sigma,\sigma} = 2$.
Thus, the corresponding entry on the right-hand side is $2\binom{r + 1}{2} = r(r + 1)$.

Now, let $\sigma_1,\sigma_2 \in S_r(K)$ be distinct.
If $\sigma_1$ and $\sigma_2$ do not share an $(r - 1)$-dimensional face, then $(B_r(K)^\top B_r(K))_{\sigma_1,\sigma_2} = 0$.
In this case, Lemma \ref{gram-entries} gives $(\Ldown_r(K; \eta))_{\sigma_1,\sigma_2} = 0$ for every $\eta \in S_{r - 2}(K)$.
So we assume that $\tau = \sigma_1 \cap \sigma_2 \in S_{r - 1}(K)$.
Then $(B_r^\top B_r)_{\sigma_1,\sigma_2} = ([\sigma_1] : [\tau])([\sigma_2] : [\tau])$.
The face $\tau$ contains exactly $r$ faces $\eta$ of dimension $r - 2$.
For each such face $\eta$, Lemma \ref{gram-entries} gives $(\Ldown_r(K; \eta))_{\sigma_1,\sigma_2} = ([\sigma_1] : [\tau])([\sigma_2] : [\tau])$, while every other $\eta$ contributes zero.
Therefore,
\[
\sum_{\eta \in S_{r - 2}(K)}
(\Ldown_r(K; \eta))_{\sigma_1,\sigma_2}
=
r([\sigma_1] : [\tau])([\sigma_2] : [\tau])
=
(rB_r^\top B_r)_{\sigma_1,\sigma_2}=(r\Ldown_r(K))_{\sigma_1,\sigma_2}.
\]
\end{proof}

\begin{lem}\label{com-graph}
Let $K$ be an $r$-dimensional complex.
Then for every integer $\ell \geq 1$,
\[
\sum_{i = 1}^\ell \lambda_{r-1,i}(K)
\leq
\frac{1}{r}
\sum_{\eta \in S_{r - 2}(K)}
\Phi_\ell(\Ldown_r(K;\eta))
=
\frac{1}{r}\sum_{\eta \in S_{r - 2}(K)}\Phi_\ell(\Lup_0(\lk \eta)).
\]
\end{lem}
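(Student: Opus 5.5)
The plan is to move from the up-Laplacian in dimension $r-1$ to the down-Laplacian in dimension $r$, and then use the decomposition of Lemma~\ref{up-decom} together with the subadditivity in Lemma~\ref{ky-sum}.

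\emph{Step 1: pass to $\Ldown_r(K)$.} We have $\Lup_{r-1}(K)=B_r(K)B_r(K)^\top$ and $\Ldown_r(K)=B_r(K)^\top B_r(K)$. These two matrices have the same nonzero eigenvalues with multiplicities. Since all spectra are padded with zeros, this gives
\[
\sum_{i=1}^{\ell}\lambda_{r-1,i}(K)=\Phi_\ell(\Ldown_r(K))\quad\text{for every } \ell\ge 1.
\]

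\emph{Step 2: decompose and apply subadditivity.} By Lemma~\ref{up-decom}, $\Ldown_r(K)=\frac1r\sum_{\eta\in S_{r-2}(K)}\Ldown_r(K;\eta)$. Scaling a positive semidefinite matrix by $1/r$ scales every $\Phi_\ell$ by $1/r$. Each $\Ldown_r(K;\eta)=B_k(K;\eta)^\top B_k(K;\eta)$ is positive semidefinite, so Lemma~\ref{ky-sum} applies to the family $\{\Ldown_r(K;\eta)\}_\eta$. This yields
\[
\Phi_\ell(\Ldown_r(K))=\frac1r\,\Phi_\ell\Bigl(\sum_\eta \Ldown_r(K;\eta)\Bigr)\le \frac1r\sum_\eta \Phi_\ell(\Ldown_r(K;\eta)),
\]
which is the inequality in the statement.

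\emph{Step 3: identify each local term with the link graph.} By Lemma~\ref{gram-entries}, $\Ldown_r(K;\eta)$ is similar to $\Ldown_1(\lk\eta)\oplus O$. The similarity is through a permutation matrix together with a signature matrix, so it is orthogonal and preserves the spectrum. Appending a zero block only adds zero eigenvalues, so $\Phi_\ell(\Ldown_r(K;\eta))=\Phi_\ell(\Ldown_1(\lk\eta))$. Finally, $\Ldown_1(\lk\eta)=B_1^\top B_1$ and $\Lup_0(\lk\eta)=B_1B_1^\top$ have the same nonzero spectrum, by the same argument as in Step 1. Hence $\Phi_\ell(\Ldown_1(\lk\eta))=\Phi_\ell(\Lup_0(\lk\eta))$, which gives the stated equality.

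\emph{Main point to check.} The only delicate step is confirming that the similarity in Lemma~\ref{gram-entries} is genuinely orthogonal, namely a relabeling of the $r$-faces combined with sign changes. A general similarity would not preserve the ordered partial sums through Lemma~\ref{ky}. Since the lemma states that it is a signed permutation, the spectrum is preserved and nothing further is needed. The rest is routine bookkeeping with zero padding: the case $\ell$ larger than the matrix sizes is covered by the trace argument inside Lemma~\ref{ky-sum}.
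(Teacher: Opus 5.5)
Your proposal is correct and follows the paper's proof: you pass from $\Lup_{r-1}(K)$ to $\Ldown_r(K)$ via the common nonzero spectrum of $B_rB_r^\top$ and $B_r^\top B_r$, apply Lemma~\ref{up-decom} and Lemma~\ref{ky-sum}, and then identify each local term with $\Lup_0(\lk\eta)$ through Lemma~\ref{gram-entries}. One small correction to your closing remark: orthogonality of the similarity is not needed, because any similarity between $\Ldown_r(K;\eta)$ and $\Ldown_1(\lk\eta)\oplus O$ preserves the characteristic polynomial, hence the eigenvalues and every $\Phi_\ell$.
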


\begin{proof}
Since $\Lup_{r - 1}(K) = B_r B_r^\top$ shares the same nonzero eigenvalues with $\Ldown_r(K)=B_r^\top B_r$ (including multiplicities), we have
\[
\Phi_\ell\left(\Lup_{r - 1}(K)\right)
=
\Phi_\ell(\Ldown_r(K)).
\]
Similarly, $\Phi_\ell(\Ldown_1(\lk \eta))=\Phi_\ell(\Lup_0(\lk \eta))$.
Then, by Lemmas \ref{ky-sum}, \ref{gram-entries}, and \ref{up-decom}, we obtain
\[
\begin{aligned}
\sum_{i = 1}^\ell \lambda_{r-1,i}(K)
&= \Phi_\ell\left(\Lup_{r - 1}(K)\right) = \Phi_\ell(\Ldown_r(K))
= \frac{1}{r}
\Phi_\ell\left(
\sum_{\eta \in S_{r - 2}(K)}
\Ldown_r(K;\eta)
\right) \\
&\leq \frac{1}{r}
\sum_{\eta \in S_{r - 2}(K)}
\Phi_\ell(\Ldown_r(K;\eta))
=
\frac{1}{r}\sum_{\eta \in S_{r - 2}(K)}\Phi_\ell(\Ldown_1(\lk \eta))\\
&=\frac{1}{r}\sum_{\eta \in S_{r - 2}(K)}\Phi_\ell(\Lup_0(\lk \eta)).
\end{aligned}
\]
\end{proof}

\section{Proofs of main theorems}\label{main-proof}
\begin{proof}[Proof of Theorem \ref{main-gm}]
Let $K$ be an $r$-dimensional complex.
For every integer $\ell\geq 1$, double counting the pairs $(\tau,j)$ satisfying $\tau\in S_{r-1}(K)$ and $1\leq j\leq\min\{\deg_{r-1}(K;\tau),\ell\}$ gives
\[
\sum_{j = 1}^{\ell} \deg^\top_{r - 1,j}(K)
=
\sum_{\tau \in S_{r - 1}(K)}
\min\{\deg_{r-1}(K;\tau),\ell\}.
\]
Fix $\eta \in S_{r-2}(K)$.
For $v \in V(\lk \eta)$, the edges of $\lk \eta$ incident with $v$ are in bijection with the $r$-faces of $K$ containing $\{v\} \cup \eta$.
Therefore $\deg_{1}(\lk \eta; v)=\deg_{r-1}(K;\{v\} \cup \eta)$.
Applying Theorem \ref{gmb} to $\lk \eta$ and double counting the pairs $(v,j)$ with $v \in V(\lk,\eta)$ and $1\leq j\leq\min\{\deg_{1}(\lk \eta;v),\ell\}$ gives
\[
\begin{split}
\Phi_\ell(\Lup_0(\lk \eta))
 & \leq
\sum_{j = 1}^{\ell}\deg^\top_{0,j}(\lk \eta)
=\sum_{v \in V(\lk \eta)} \min\{\deg_{1}(\lk \eta;v),\ell\}\\
&=\sum_{\substack{\eta \subset \tau \in S_{r - 1}(K) }}
\min\{\deg_{r - 1}(K;\tau),\ell\}.
\end{split}
\]

Suppose first that $1\leq\ell\leq f_r(K)$.
By Lemma \ref{com-graph}, we have
\[
\begin{aligned}
\sum_{i = 1}^{\ell} \lambda_{r-1,i}(K)
&\leq \frac{1}{r}
\sum_{\eta \in S_{r - 2}(K)}
\Phi_\ell(\Lup_0(\lk \eta)) \\
&\leq
\frac{1}{r}
\sum_{\eta \in S_{r - 2}(K)}
\sum_{\substack{\eta \subset \tau \in S_{r - 1}(K)}}
\min\{\deg_{r - 1}(K;\tau),\ell\} \\
&=
\sum_{\tau \in S_{r - 1}(K)}
\min\{\deg_{r - 1}(K;\tau),\ell\}
=
\sum_{j = 1}^{\ell} \deg^\top_{r - 1,j}(K).
\end{aligned}
\]
If $\ell>f_r(K)$, as $\rank\Lup_{r-1}(K)\leq f_r(K)$, we have  $\lambda_{r-1,j}(K)=0$ for $j>f_r(K)$.
Also, $\deg_{r-1}(K;\tau)\leq f_r(K)$ for every $\tau$, so $\deg^\top_{r-1,j}(K)=0$ for $j>f_r(K)$.
Thus the partial-sum inequality for $\ell$ reduces to the already proved case $\ell \le f_r(K)$.

We now check equality of the total sums.
By definition, $\tr (\Ldown_r(K))= (r + 1)f_r(K)$.
Double counting the pairs $(\tau,\sigma)$ satisfying $\tau \in S_{r - 1}(K)$, $\sigma \in S_r(K)$, and $\tau \subset \sigma$ gives
\[
\sum_{\tau \in S_{r - 1}(K)} \deg_{r-1}(K;\tau) = (r + 1)f_r(K).
\]
Since $\deg_{r-1}(K;\tau) \leq f_r(K)$ for every $\tau \in S_{r - 1}(K)$, 
$$\sum_{j \ge 1} \deg^\top_{r - 1,j}(K) = \sum_{j = 1}^{f_r(K)} \deg^\top_{r - 1,j}(K)
=
\sum_{\tau \in S_{r - 1}(K)}\deg_{r-1}(K;\tau)=(r + 1)f_r(K).$$
Consequently,
\[
\sum_{i \geq 1} \lambda_{r-1,j}(K)
= \tr\left(\Lup_{r - 1}(K)\right)
= \tr(\Ldown_r(K))
= (r + 1)f_r(K) = \sum_{j \geq 1} \deg^\top_{r - 1,j}(K).
\]
By the above discussion, we get $\bflam_{r - 1}(K) \pre \dd_{r - 1}^\top(K)$.
\end{proof}

\begin{rmk}
Let $\Delta_{r+1}$ be the $r$-dimensional simplex on $r+1$ vertices.
Then
\[
\bflam_{r-1}(\Delta_{r+1})
=
(r+1,0,\ldots,0)
=
\dd_{r-1}^\top(\Delta_{r+1}),
\]
so equality holds in Theorem \ref{main-gm}.
\end{rmk}

\begin{proof}[Proof of Theorem \ref{main-b}]
Fix an integer $\ell \geq 1$.
For every $\eta \in S_{r - 2}(K)$, by Lemma \ref{gram-entries}, we have 
$$\Phi_\ell(\Ldown_r(K;\eta)) = \Phi_\ell(\Ldown_1(\lk \eta))= \Phi_\ell(\Lup_0(\lk \eta)).$$
Applying Theorem \ref{brou} to the graph $\lk \eta$ gives
\begin{equation}\label{eq-brouwer}
\Phi_\ell(\Ldown_r(K;\eta))=\Phi_\ell(\Lup_0(\lk \eta))
\leq
|S_1(\lk \eta)|
+
\binom{\ell + 1}{2}.
\end{equation}

For each $\eta \in S_{r - 2}(K)$, the edges of $\lk \eta$ are in bijection with the $r$-dimensional faces of $K$ containing $\eta$.
Therefore, double counting the pairs $(\eta,\sigma)$ satisfying $\eta \in S_{r - 2}(K)$, $\sigma \in S_r(K)$, and $\eta \subset \sigma$ gives
\begin{equation}\label{eq-sum-edge}
\sum_{\eta \in S_{r - 2}(K)}
|S_1(\lk \eta)|
=
\binom{r + 1}{2} f_r(K).
\end{equation}
Indeed, every $r$-dimensional face contains exactly $\binom{r + 1}{2}$ faces of dimension $r - 2$.

Using Lemma \ref{com-graph}, \eqref{eq-brouwer}, and \eqref{eq-sum-edge}, we obtain
\[
\begin{aligned}
\sum_{i = 1}^{\ell} \lambda_{r-1,i}(K)
&\leq
\frac{1}{r}
\sum_{\eta \in S_{r - 2}(K)}
\Phi_\ell(\Ldown_r(K;\eta)) \\
&\leq
\frac{1}{r}
\sum_{\eta \in S_{r - 2}(K)}
\left(
|S_1(\lk \eta)|
+
\binom{\ell + 1}{2}
\right) \\
&=
\frac{1}{r}
\binom{r + 1}{2} f_r(K)
+
\frac{f_{r - 2}(K)}{r}
\binom{\ell + 1}{2} \\
&=
\frac{r + 1}{2} f_r(K)
+
\frac{f_{r - 2}(K)}{r}
\binom{\ell + 1}{2}.
\end{aligned}
\]
\end{proof}

\section{Counterexample to the Duval-Reiner conjecture}\label{counter}
We begin with a pure two-dimensional complex that serves as the base of the construction.
Let $K$ be the pure two-dimensional simplicial complex with vertices  $1,\ldots,7$ whose facets, in the displayed order, are
\[
\begin{aligned}
S_2(K) = \{&124,126,127,134,135,136,137,145,146,147,\\
&156,157,167,256,456,457,467,567\}.
\end{aligned}
\]
Here, $abc$ denotes the face $\{a,b,c\}$.
The complex has $20$ edges, and its vertex degrees are
\[
(\deg_0(K;1),\deg_0(K;2),\ldots,\deg_0(K;7)) = (13,4,4,8,8,9,8).
\]
Therefore,
\[
\begin{aligned}
\dd_0^\top(K) &= (7,7,7,7,5,5,5,5,2,1,1,1,1).
\end{aligned}
\]

Orient every face by the natural increasing order of its vertices.
An exact computation gives the characteristic polynomial of $\Lup_1(K)$: 
\[
\det\bigl(xI_{20}-\Lup_1(K)\bigr)
=
x^6(x-7)^3(x-5)^2(x-1)^4p(x),
\]
where
\[
p(x) = x^5 - 19x^4 + 133x^3 - 413x^2 + 527x - 175.
\]
The polynomial $p(x)$ has five real roots $\alpha_1 > \cdots > \alpha_5$.
A direct computation gives
\[
6.16382 < \alpha_1 < 6.16383,
\qquad
5.83636 < \alpha_2 < 5.83637.
\]
In particular, $\alpha_1 + \alpha_2 > 12.00018 > 12$, and the first five largest eigenvalues of $\Lup_1(K)$ are $7,7,7,\alpha_1,\alpha_2$.
Consequently,
\[
\sum_{i = 1}^5 \lambda_{1,i}(K)
=
21 + \alpha_1 + \alpha_2
>
33
=
\sum_{j = 1}^5 \deg^\top_{0,j}(K).
\]
Thus, $K$ violates the fifth partial-sum inequality in Conjecture \ref{dr}.

For an integer $q \geq 0$, introduce new vertices $v_1,\ldots,v_q$ and define the pure two-dimensional complex $K_q$ by
\[
S_2(K_q)
=
S_2(K)
\cup
\{14v_h : 1 \leq h \leq q\}.
\]
Thus, $K_q$ is obtained from $K$ by attaching $q$ triangles along the edge $14$.

\begin{lem}\label{complex-sum}
For every integer $q \geq 0$, the first five largest eigenvalues of $\Lup_1(K_q)$ are
\[
q+7,\ 7,\ 7,\ \alpha_1,\ \alpha_2.
\]
\end{lem}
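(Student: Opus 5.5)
The plan is to compute the spectrum through the down Laplacian $\Ldown_2(K_q)=B_2(K_q)^\top B_2(K_q)$ on triangles. Its nonzero eigenvalues coincide with those of $\Lup_1(K_q)$, as in the proof of Lemma~\ref{com-graph}. Then I would reduce the effect of the $q$ attached triangles to a rank-one perturbation of $A:=\Ldown_2(K)$.

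\textbf{Step 1: block structure.} Orient $[1,4,v_h]$ increasingly. Then $\p_2[1,4,v_h]=[4,v_h]-[1,v_h]+[1,4]$, so each new triangle has coefficient $+1$ on $[1,4]$. The edges $1v_h$ and $4v_h$ have degree $1$. By Lemma~\ref{gram-entries}-type entry computations,
\[
\Ldown_2(K_q)=\begin{pmatrix} A & c\,\mathbf 1^\top\\ \mathbf 1\, c^\top & 2I_q+J_q\end{pmatrix},
\qquad c=B_2(K)^\top e_{14},
\]
where $e_{14}$ is the basis vector of the edge $14$. The nonzero entries of $c$ sit on the five old triangles through $14$.

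\textbf{Step 2: split off the trivial part.} Vectors supported on the new triangles with coordinate sum $0$ are eigenvectors with eigenvalue $2$, giving multiplicity $q-1$. These are irrelevant for the top five, since $2<\alpha_2$. The complementary invariant subspace $\{(y,t\mathbf 1)\}$ carries the $19\times19$ matrix
\[
M_q=\begin{pmatrix} A & qc\\ c^\top & q+2\end{pmatrix}.
\]
For $x$ not an eigenvalue of $A$, the Schur complement gives
\[
\det(xI-M_q)=\det(xI-A)\bigl(x-2-q-q\,g(x)\bigr),\qquad g(x)=c^\top(xI-A)^{-1}c=\sum_{\mu}\frac{w_\mu}{x-\mu},
\]
where $w_\mu$ is the squared norm of the projection of $c$ onto the $\mu$-eigenspace of $A$. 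Equivalently, $w_\mu$ equals $\mu$ times the squared projection of $e_{14}$ onto the $\mu$-eigenspace of $\Lup_1(K)$.

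Hence the spectrum of $M_q$ consists of two parts:
\begin{itemize}
\item every eigenvalue $\mu$ of $A$ with multiplicity reduced by one when $w_\mu\neq0$ (fully retained when $w_\mu=0$);
\item the roots of the secular equation $x-2-q=q\,g(x)$, which interlace the poles $\{\mu: w_\mu\neq 0\}$ together with one root above the largest pole.
\end{itemize}

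\textbf{Step 3: the exact data.} From the factorization of $\det(xI-\Lup_1(K))$ already computed, I would determine the weights $w_\mu$ exactly. I expect the following, to be checked by exact computation using the explicit list of $18$ facets:
\begin{itemize}
\item $e_{14}$ has nonzero projection on the $7$-eigenspace, of dimension $3$, so one copy of $7$ is consumed and $7,7$ survive;
\item $w_{\alpha_1}=w_{\alpha_2}=0$, since $e_{14}$ is orthogonal to the $p$-part of the spectrum in the relevant components, so $\alpha_1,\alpha_2$ survive;
\item all other poles are $\le 5$.
\end{itemize}
Then the secular equation has one root in $(7,\infty)$, one root in $(5,7)$, and the rest below $5$.

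\textbf{Step 4: the top root and the main obstacle.} I would verify directly that $x=q+7$ solves $x-2-q=q\,g(x)$. With only the poles $7$ and lower, this reduces to the identity $5=q\,g(q+7)$. I would try to prove it by exhibiting an explicit eigenvector of $\Lup_1(K_q)$, built from $e_{14}$ plus a combination of $7$-eigenvectors of $\Lup_1(K)$, and checking that it has eigenvalue $q+7$.

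The main obstacle is the root of the secular equation lying in $(5,7)$: I must show it is at most $\alpha_2$ for every $q\geq0$. I would handle it by monotonicity in $q$. The function $x-2-q-q\,g(x)$ is affine in $q$, so the middle root moves monotonically with $q$. I would compute its limit as $q\to\infty$, namely the corresponding root of $1+g(x)=0$, and its value at $q=0$, and show both are below $\alpha_2$; the cleanest case is if this root lands in $(5,\alpha_2]$ for all $q$. If the weights turn out differently from Step 3, the same Schur-complement framework still applies, but the exact pole structure must then be read off from the characteristic polynomial.
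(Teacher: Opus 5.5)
Your Schur-complement framework in Steps 1--2 is sound and matches the paper's block form of $\Ldown_2(K_q)$. There is a genuine gap, though: the proposal stops exactly where the content of the lemma lies. Step 3 is only an expectation to be checked by computation, and that expectation is wrong.

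The missing fact is that $c$ is itself an eigenvector: $\Ldown_2(K)c=7c$. This is a short hand check. One has
$B_2(K)c=5[1,4]-[1,2]-[1,3]-[1,5]-[1,6]-[1,7]-[2,4]-[3,4]+[4,5]+[4,6]+[4,7]$.
Pairing this with $\p_2[F]$ gives $-7$ for $F\in\{124,134\}$, $7$ for $F\in\{145,146,147\}$, and $0$ for every other facet. Consequently:
\begin{itemize}
\item $w_7=c^\top c=5$ and every other weight vanishes, so $g(x)=5/(x-7)$;
\item your secular equation becomes $(x-2)(x-q-7)=0$;
\item the spectrum of $\Ldown_2(K_q)$ is that of $\Ldown_2(K)$ with one copy of $7$ replaced by $q+7$, together with $q$ copies of $2$.
\end{itemize}
This is exactly what the paper obtains, via the invariant subspace $U=\operatorname{span}\{(c,0),(0,\mathbf 1)\}$ (on which $\Ldown_2(K_q)$ acts by $\bigl(\begin{smallmatrix}7&q\\5&q+2\end{smallmatrix}\bigr)$, with eigenvalues $2$ and $q+7$) and the invariant subspace $c^\perp\oplus\{0\}$.

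In particular there is no secular root in $(5,7)$, so the ``main obstacle'' you isolate in Step 4 does not exist. Worse, your Step 3 prediction is incompatible with your own Step 4 target. Since $\sum_\mu w_\mu=c^\top c=5$ and every eigenvalue of $\Ldown_2(K)$ is at most $7$, you get
$q\,g(q+7)=\sum_\mu q\,w_\mu/(q+7-\mu)\le 5$,
with equality if and only if $w_\mu=0$ for all $\mu\neq 7$. So if some pole $\mu\le 5$ carried positive weight, which is the only way to produce a root in $(5,7)$, then $q+7$ would not be an eigenvalue at all, and the lemma would be false.

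The identity $5=q\,g(q+7)$ you planned to verify is therefore equivalent to $\Ldown_2(K)c=7c$. Your proposed route of building an eigenvector from $e_{14}$ plus $7$-eigenvectors can only work because of this fact. Once you check it, the unresolved monotonicity argument becomes unnecessary; without it, the proposal does not establish the lemma.
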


\begin{proof}
The result is immediate when $q = 0$, so assume that $q \geq 1$.
Order the vertices by
\[
1<2<\cdots<7<v_1<\cdots<v_q
\]
and order the new facets $14v_1,\ldots,14v_q$ after the facets of $K$.
For each $j$, taking the inner products of $\p_2([1,4,v_j])$ with $\p_2 [F]$ for each $F \in S_2(K)$ in the displayed order, gives the same vector
\[
c=(-1,0,0,-1,0,0,0,1,1,1,0,0,0,0,0,0,0,0)^\top.
\]
It follows that
\[
\Ldown_2(K_q)
=
\begin{pmatrix}
\Ldown_2(K) & c\mathbf{1}_q^\top \\
\mathbf{1}_q c^\top & 2I_q + J_q
\end{pmatrix},
\]
where $\mathbf{1}_q$ is the all-one vector of dimension $q$, and $J_q = \mathbf{1}_q\mathbf{1}_q^\top$. 
For any $\sigma \in S_2(K)$, we have
\[
\bigl(\Ldown_2(K)c\bigr)_\sigma
=
\begin{cases}
-7, & \text{if ~ } \sigma \in \{124,134\},\\
~ 7, & \text{if ~ }\sigma \in \{145,146,147\},\\
~ 0, & \text{otherwise}.
\end{cases}
\]
Therefore, $\Ldown_2(K)c = 7c$, and  $c^\top c = 5$.

The subspace
\[
W = \{\mathbf{0}_7\} \oplus \{\xi \in \R^q : \mathbf{1}_q^T\xi = 0\}
\]
is a $(q - 1)$-dimensional eigenspace of $\Ldown_2(K_q)$ associated with the eigenvalue $2$, where $\mathbf{0}_t$ denotes the zero vector of dimension $t$.

The subspace
\[
U = \operatorname{span}\{(c,\mathbf{0}_q),(\mathbf{0}_7,\mathbf{1}_q)\}
\]
is invariant under $\Ldown_2(K_q)$, and the restriction of $\Ldown_2(K_q)$ to $U$ is represented, in the displayed ordered basis, by the matrix
\[
\begin{pmatrix}
7 & q \\
5 & q + 2
\end{pmatrix}.
\]
Its eigenvalues are $2$ and $q+7$.

Since $\Ldown_2(K)$ is symmetric and $c$ is an eigenvector of $\Ldown_2(K)$ associated with the eigenvalue $7$, the subspace $c^\perp\oplus\{\mathbf{0_q}\}$ is invariant under $\Ldown_2(K_q)$, and the restriction agrees with the restriction of $\Ldown_2(K)$ to $c^\perp$.

So, we have the following decomposition:
\[
\R^{18+q}
=
(c^\perp\oplus\{\mathbf{0}_q\})\oplus U\oplus W,
\]
where $\Ldown_2(K_q)$ has eigenvalues $\lambda \in \bflam_{2}(K) \setminus \{7\}$ on $c^\perp\oplus\{\mathbf{0}_q\}$, eigenvalues $2$ and $q+7$ on $U$, and eigenvalue $2$ with multiplicity $q-1$ on $W$.
Hence, the spectrum of $\Ldown_2(K_q)$ is obtained from $\Ldown_2(K)$ by replacing one copy of $7$ by $q+7$ and adding $q$ copies of $2$.

Since $\Lup_1(K_q)$ and $\Ldown_2(K_q)$ share the same nonzero eigenvalues, the same statement holds for the positive eigenvalues of $\Lup_1(K_q)$.
Finally, $q+7>7>\alpha_1>\alpha_2>5>2$, which proves the result.
\end{proof}

We now lift the construction to arbitrary dimension.
Fix integers $r\geq 2$ and $n\geq r+5$, and set $q=n-r-5$.
Let $A$ be a set of $r-2$ new vertices disjoint from $V(K_q)$; when $r=2$, take $A=\emptyset$.
Define $K_{n,r}$ to be the pure $r$-dimensional simplicial complex whose facets are
\[
S_r(K_{n,r})
=
\{A\cup F:F\in S_2(K_q)\}.
\]
Every facet has cardinality $(r-2)+3=r+1$, and
\[
|V(K_{n,r})|=(r-2)+(7+q)=n.
\]
This construction is the join of $K_q$ with the simplex on the vertex set $A$.
\begin{lem}\label{complex-core}
Under the bijection $F\mapsto A\cup F$ from $S_2(K_q)$ to $S_r(K_{n,r})$,
\[
\Ldown_r(K_{n,r})
=
\Ldown_2(K_q)+(r-2)I_{18+q}.
\]
\end{lem}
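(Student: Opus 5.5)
We compare the two Gram matrices entry by entry, as in Lemma~\ref{up-decom}. First we fix the vertex order: every vertex of $A$ precedes every vertex of $V(K_q)$, and $V(K_q)$ keeps its order $1<\cdots<7<v_1<\cdots<v_q$. Then for $F=\{w_0<w_1<w_2\}\in S_2(K_q)$ the oriented facet is $[A\cup F]=[a_1,\ldots,a_{r-2},w_0,w_1,w_2]$. Its boundary has two kinds of $(r-1)$-faces: faces $(A\setminus\{a_i\})\cup F$, with sign $(-1)^{i-1}$, and faces $A\cup(F\setminus\{w_j\})$, with sign $(-1)^{r-2+j}$. The second sign equals $(-1)^{r}$ times the sign $([F]:[F\setminus\{w_j\}])$ in $\p_2[F]$.

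The diagonal entries are immediate. Each $r$-face has $r+1$ faces of dimension $r-1$, so $(\Ldown_r(K_{n,r}))_{\sigma,\sigma}=r+1=(\Ldown_2(K_q))_{F,F}+(r-2)$.

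For distinct facets $\sigma_1=A\cup F_1$ and $\sigma_2=A\cup F_2$, the entry is nonzero only if $\tau=\sigma_1\cap\sigma_2$ is an $(r-1)$-face. We have $\sigma_1\cap\sigma_2=A\cup(F_1\cap F_2)$ with $A$ fully contained, and it has size $r$ exactly when $|F_1\cap F_2|=2$, i.e.\ when $F_1\cap F_2$ is an edge of $K_q$. Faces of the first kind, $(A\setminus\{a_i\})\cup F$, determine $F$, so they are never shared by distinct facets. In the shared case $\tau=A\cup(F_1\cap F_2)$, and
\[
([\sigma_1]:[\tau])([\sigma_2]:[\tau])=(-1)^{2r}([F_1]:[F_1\cap F_2])([F_2]:[F_1\cap F_2]),
\]
which is the $(F_1,F_2)$-entry of $\Ldown_2(K_q)$ by the analogue of Lemma~\ref{gram-entries} for $\Ldown_2$. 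Hence the off-diagonal entries agree, and together with the diagonal this gives $\Ldown_r(K_{n,r})=\Ldown_2(K_q)+(r-2)I_{18+q}$.

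The only delicate step is the sign bookkeeping, which the chosen order makes uniform. With a different vertex order the identity would hold only up to conjugation by a signature matrix (cf.\ \cite[Lemma 3.2]{FSZ2025}), which still preserves the spectrum, and the spectrum is all that is used afterwards. For $r=2$ the set $A$ is empty and the statement is trivial.
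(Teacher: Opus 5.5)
Your proof is correct and follows essentially the same route as the paper. You place $A$ before $V(K_q)$, note that the diagonal entries go from $3$ to $r+1$, and observe that the only possible common $(r-1)$-face of $A\cup F_1$ and $A\cup F_2$ is $A\cup(F_1\cap F_2)$, whose two incidence signs each pick up the factor $(-1)^{r-2}=(-1)^r$, which cancels in the product.
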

\begin{proof}
Order the vertices of $A$ before those of $K_q$ and use the induced orientations.
For every $F\in S_2(K_q)$, the diagonal entries of $\Ldown_2(K_q)$ and $\Ldown_r(K_{n,r})$ indexed by $F$ and $A\cup F$ are $3$ and $r+1$, respectively.
Thus, each diagonal entry increases by $r-2$.

Let $F,G\in S_2(K_q)$ be distinct.
If $F$ and $G$ do not share an edge, then $A\cup F$ and $A\cup G$ do not share an $(r-1)$-face, so the corresponding off-diagonal entries of both down-Laplacians are zero.
If $e=F\cap G\in S_1(K_q)$, then $A\cup e$ is their unique common $(r-1)$-face.
Because all vertices of $A$ precede those of $K_q$,
\[
([A\cup F]:[A\cup e])=(-1)^{r-2}([F]:[e]),
\quad
([A\cup G]:[A\cup e])=(-1)^{r-2}([G]:[e]).
\]
The two factors $(-1)^{r-2}$ cancel in the product, so the corresponding off-diagonal entries are equal.
\end{proof}

\begin{prop}\label{general-counterexamples}
For every $r \geq 2$ and $n \geq r + 5$, the complex $K_{n,r}$ satisfies
\[
\sum_{i = 1}^5 \lambda_{r-1,i}(K_{n,r})
>
\sum_{j = 1}^5 \deg^\top_{0,j}(K_{n,r}).
\]
Consequently, the Duval-Reiner conjecture fails for $K_{n,r}$.
\end{prop}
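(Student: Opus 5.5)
The plan is to compute both sides of the fifth partial-sum inequality explicitly and compare them: Lemma \ref{complex-sum} and Lemma \ref{complex-core} give the left side, and direct degree counting gives the right side.

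\emph{Spectral side.} Lemma \ref{complex-core} gives $\Ldown_r(K_{n,r})=\Ldown_2(K_q)+(r-2)I_{18+q}$. So the spectrum of $\Ldown_r(K_{n,r})$ is the spectrum of $\Ldown_2(K_q)$ with every eigenvalue shifted by $r-2$.

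By Lemma \ref{complex-sum}, and because $\Ldown_2(K_q)$ and $\Lup_1(K_q)$ have the same nonzero eigenvalues, the five largest eigenvalues of $\Ldown_2(K_q)$ are $q+7,7,7,\alpha_1,\alpha_2$. These are nonzero and $18+q\ge 5$, so they really are the top five entries of the order-$(18+q)$ spectrum.

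$\Lup_{r-1}(K_{n,r})=B_rB_r^\top$ and $\Ldown_r(K_{n,r})=B_r^\top B_r$ also share their nonzero eigenvalues. Hence the five largest eigenvalues of $\Lup_{r-1}(K_{n,r})$ are $q+7+(r-2)$, $7+(r-2)$ (twice), $\alpha_1+(r-2)$ and $\alpha_2+(r-2)$. These are all positive, including when $r=2$. Therefore
\[
\sum_{i=1}^5\lambda_{r-1,i}(K_{n,r}) = 5(r-2)+q+21+\alpha_1+\alpha_2 > 5(r-2)+q+33,
\]
using $\alpha_1+\alpha_2>12$ from the computation for $K$.

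\emph{Degree side.} As in the proof of Theorem \ref{main-gm}, double counting gives
\[
\sum_{j=1}^5\deg^\top_{0,j}=\sum_{v}\min\{\deg_0(K_{n,r};v),5\}.
\]
Every facet contains $A$, so each $a\in A$ has degree $f_r=18+q\ge 5$ and contributes $5$, for a total of $5(r-2)$.

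For $v\in V(K_q)$, the degree of $v$ in $K_{n,r}$ equals its degree in $K_q$, because $F\mapsto A\cup F$ is a bijection of facets. Starting from the listed degrees of $K$ and adding the $q$ triangles $14v_h$, the degrees in $K_q$ are:
\begin{itemize}
\item vertex $1$: $13+q$; vertex $4$: $8+q$;
\item vertices $5,6,7$: $8,9,8$;
\item vertices $2,3$: $4$ each;
\item each $v_h$: $1$.
\end{itemize}
Capping at $5$, vertices $1,4,5,6,7$ contribute $25$, vertices $2,3$ contribute $8$, and the $v_h$ contribute $q$. The right side therefore equals $5(r-2)+33+q$.

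Comparing the two computations gives the strict inequality, which violates \eqref{DR-eq} for $K_{n,r}$.

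The step needing most care is the spectral bookkeeping. One must check that the top five eigenvalues survive passing from $\Ldown_r$ to $\Lup_{r-1}$, which needs them to be nonzero and $f_r\ge5$. One must also check that the uniform shift by $r-2$ adds exactly $5(r-2)$, matching the contribution of the vertices of $A$. That way the strict margin $\alpha_1+\alpha_2-12>0$ from the base complex $K$ is preserved.
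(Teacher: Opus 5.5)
Your proof is correct and matches the paper's argument: both read off the top five eigenvalues $n, r+5, r+5, r-2+\alpha_1, r-2+\alpha_2$ of $\Lup_{r-1}(K_{n,r})$ from Lemmas \ref{complex-sum} and \ref{complex-core}, which leaves the margin $\alpha_1+\alpha_2-12>0$. The only difference is cosmetic: you evaluate the degree side as $\sum_v\min\{\deg_0(K_{n,r};v),5\}$ instead of computing the entries $(n,r+5,r+5,r+5,r+3)$ one by one, and both give $5r+23+q$.
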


\begin{proof}
By Lemmas \ref{complex-sum}, \ref{complex-core}, and the equality of the positive eigenvalues of $\Lup_{r-1}(K_{n,r})$ and $\Ldown_r(K_{n,r})$, the first five largest eigenvalues of $\Lup_{r-1}(K_{n,r})$ are
$$n, r + 5, r + 5, r - 2 + \alpha_1,
r - 2 + \alpha_2.$$

 Next, we determine the first five entries of $\dd_0^\top(K_{n,r})$.
Every vertex of $A$ belongs to all $18+q$ facets.
The degrees of the remaining vertices are
\[
\begin{aligned}
d_0(1)&=13+q,&
d_0(2)&=4,&
d_0(3)&=4,&
d_0(4)&=8+q,\\
d_0(5)&=8,&
d_0(6)&=9,&
d_0(7)&=8,&
d_0(v_h)&=1, h \ge 7.
\end{aligned}
\]
Every vertex has a positive degree, so $\deg^\top_{0,1}(K_{n,r})=n$.
For $j\in\{2,3,4\}$, precisely the $q$ vertices $v_1,\ldots,v_q$ have a degree less than $j$, and hence
\[
\deg^\top_{0,j}(K_{n,r})=n-q=r+5.
\]
For $j=5$, the vertices $v_1,\ldots,v_q,2,3$ have a degree less than $5$, and therefore
\[
\deg^\top_{0,5}(K_{n,r})=n-q-2=r+3.
\]
Thus
\[
\bigl(\deg^\top_{0,1}(K_{n,r}),\ldots,\deg^\top_{0,5}(K_{n,r})\bigr)
=
(n,r+5,r+5,r+5,r+3).
\]
It follows that
\[
\sum_{i=1}^{5}\lambda_{r-1,i}(K_{n,r})
-
\sum_{j=1}^{5}\deg^\top_{0,j}(K_{n,r})
=
\alpha_1+\alpha_2-12
>
0.
\]
Hence $K_{n,r}$ violates the fifth partial-sum inequality in Conjecture \ref{dr}.
\end{proof}

\end{document}